\newtheorem{thm}{Theorem}[section]
\newtheorem{lemma}[thm]{Lemma}
\newtheorem{cor}[thm]{Corollary}
\newtheorem{prop}[thm]{Proposition}
\newtheorem{conj}[thm]{Conjecture}
\theoremstyle{definition}
\newtheorem{definition}[thm]{Definition}
\newtheorem{example}[thm]{Example}
\newtheorem{prob}[thm]{Problem}
\theoremstyle{remark}
\newtheorem{rmk}[thm]{Remark}
\newtheorem{obs}[thm]{Observation}
\newtheorem*{notation}{Notation}
\numberwithin{equation}{section}
\newcommand{\op}{^{\mathrm{op}}}
\DeclareMathOperator{\Good}{Good}
\DeclareMathOperator{\id}{id}
\newcommand{\Z}{\mathbb{Z}}
\newcommand{\F}{\mathcal{F}}
\newcommand{\inv}{^{-1}}
\DeclareMathOperator{\Aut}{Aut}
\DeclareMathOperator{\Inn}{Inn}
\DeclareMathOperator{\Conj}{Conj}
\DeclareMathOperator{\Alex}{Alex}
\DeclareMathOperator{\Free}{Free}
\newcommand{\oo}{\mathcal{O}}
\renewcommand{\phi}{\varphi}
\newcommand{\surj}{\twoheadrightarrow}
\newcommand{\bij}{\xrightarrow{\sim}}
\DeclareMathOperator{\Fix}{Fix}
\newcommand{\addresseshere}{%
	\enddoc@text\let\enddoc@text\relax
}
\begin{document}
	
	\title[
    Good involutions of twisted conjugation subquandles
    ]{
    Good involutions of twisted conjugation subquandles\\and Alexander quandles
    }
	\author{L\d\uhorn c Ta}	
	
	\address{Department of Mathematics, University of Pittsburgh, Pittsburgh, Pennsylvania 15260}
	\email{ldt37@pitt.edu}
	
	\subjclass[2020]{Primary 20N02; Secondary 20D45, 20K30, 57K12}
	
	\keywords{Alexander quandle, enumeration, good involution, group automorphism, linear quandle, symmetric quandle, twisted conjugation quandle}
	
	\begin{abstract}
	We completely describe good involutions of free quandles and subquandles of twisted conjugation quandles of groups, including all Alexander quandles. As an application, we enumerate good involutions of linear quandles, and we provide explicit mappings for those up to order $23$ via a computer search. Along the way, we completely characterize connected, involutory Alexander quandles, which may be of independent interest.
	\end{abstract}
	\maketitle
	
\section{Introduction}\label{sec:intro}

Introduced by Kamada \cite{symm-quandles-2} in 2007, \emph{symmetric quandles} are nonassociative algebraic structures used to study surface-links \citelist{\cite{symm-quandles-2}\cite{tensor}\cite{yasuda}\cite{dihedral}}, handlebody-links \cite{multiple}, $3$-manifolds \cite{nosaka}, compact surfaces with
boundary in ribbon forms \cite{symm-racks}, and framed virtual knots in thickened surfaces \cite{virtual}. Defined as pairs $(Q,\rho)$ where $Q$ is a quandle and $\rho$ is a function called a \emph{good involution} of $Q$, symmetric quandles enjoy various (co)homology theories \citelist{\cite{symm-quandles-2}\cite{karmakar}} and connections to group theory \citelist{\cite{lie}\cite{multiple}}. 

Symmetric quandles can be traced back to \emph{kei}, which Takasaki~\cite{takasaki} introduced in 1943 to study Riemannian symmetric spaces; \emph{quandles}, which Joyce~\cite{joyce} and Matveev~\cite{matveev} independently introduced in 1982 to construct complete invariants of unframed links; and \emph{racks}, which Fenn and Rourke \cite{fenn} introduced in 1992 to construct complete invariants of framed links. See \citelist{\cite{quandlebook}\cite{book}} for references on the theory and \cite{survey} for a survey of the algebraic state of the art.

Building upon the results in \cite{ta-conj}, this paper concerns the following classification problems posed by Taniguchi \cite{taniguchi}.

\begin{prob}\label{prob1}
    Find necessary and sufficient conditions for good involutions $\rho$ of quandles to exist.
\end{prob}

\begin{prob}\label{prob2}
    Given a quandle $Q$, determine all symmetric quandles of the form $(Q,\rho)$.
\end{prob}

Although various constructions of symmetric quandles exist \citelist{\cite{galkin-symm}\cite{dihedral}\cite{symm-quandles}}, few complete classification results in the vein of Problems \ref{prob1} and \ref{prob2} exist in the literature. 
In 2010, Kamada and Oshiro \cite{symm-quandles-2} addressed Problem \ref{prob1} for kei and Problem \ref{prob2} for dihedral quandles and trivial quandles. 
In 2023, Taniguchi \cite{taniguchi} solved Problem \ref{prob1} for generalized Alexander quandles and Problem \ref{prob2} for connected generalized Alexander quandles. In 2025, the author \cite{ta-conj} addressed Problems \ref{prob1} and \ref{prob2} for core quandles and subquandles of conjugation quandles.

This paper answers Problems \ref{prob1} and \ref{prob2} for a large class of quandles called \emph{twisted conjugation quandles}, which Andruskiewitsch and Gra\~na \cite{hopf} introduced in 2003 under the name \emph{twisted homogeneous crossed sets} to study pointed Hopf algebras. Twisted conjugation quandles are of considerable interest to algebraists and knot theorists alike \citelist{\cite{embed}\cite{residual}\cite{sc}\cite{hopf}}, in no small part because they generalize two already large classes of quandles called \emph{conjugation quandles} and \emph{Alexander quandles}. 

Conjugation quandles are used to generalize Wirtinger presentations of knot groups \citelist{\cite{joyce}\cite{matveev}}, and they enjoy many other connections across algebra \citelist{\cite{lattice}\cite{conjugation}\cite{lie}\cite{taGQ}} and low-dimensional topology \citelist{\cite{eisermann}\cite{fenn}}. In particular, good involutions of conjugation quandles are used to study noncommutative geometry applied to finite groups \cite{lie}, minimal triple point numbers of nonorientable surface-knots \cite{dihedral}, and coloring invariants of handlebody-knots \cite{multiple}. Many common classes of quandles canonically embed into conjugation quandles; see \cite{embed} for a list. On the other hand, Alexander quandles provide a framework for studying modules over the ring of integral Laurent polynomials $\Z[t^{\pm 1}]$ \citelist{\cite{alex}\cite{hou}}, and they provide linear-algebraic generalizations of Fox coloring invariants of knots \cite{fox}. 

\subsection{Main result}
Our most general result states that, given a subquandle $X$ of a twisted conjugation quandle $\Conj(G,\phi)$, every good involution $\rho$ of $X$ is completely determined by a unique function $\zeta^*$ from the set $\oo(X,s)$ of connected components of $(X,s)$ to a certain set $S$ (cf.\ Remark \ref{rmk:core}) satisfying certain conditions. 

\begin{thm}[Theorem \ref{thm:main2}]\label{thm:main}
    Let $\phi$ be an automorphism of a group $G$, and let $(X,s)$ be a subquandle of the twisted conjugation quandle $\Conj(G,\phi)$. Let $\langle X\rangle$ denote the subgroup of $G$ generated by $X$, and define the set
    \[
    S:=\{x\in \langle X\rangle \mid \phi^{2}(y)=\phi(x)yx\inv \text{ for all }y\in  X\}.
    \]
    (See \eqref{eq:s2} for another description of $S$ in the case that $X$ contains the identity element of $G$.) 
    
    Then for all functions $\rho:X\to X$, $\rho$ is a good involution of $(X,s)$ if and only if there exists a function $\psi:X\to S$ such that
    \[
    \rho(x)=\psi(x)\phi(x\inv),\qquad\psi s_x=\psi=\psi\rho
    \]
    for all $x\in X$. (Cf.\ Remark \ref{rmk:core} and Theorem \ref{cor:taniguchi}.)
\end{thm}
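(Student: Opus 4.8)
\emph{Approach.} The plan is to bootstrap from the already-settled case of subquandles of \emph{ordinary} conjugation quandles. The key observation is that $\Conj(G,\phi)$ is canonically a subquandle of the conjugation quandle $\Conj(\widetilde G)$ of the extension $\widetilde G:=G\rtimes_\phi\Z$: identifying $x\in G$ with the element $\widetilde x$ of $\widetilde G$ sitting over $1\in\Z$, conjugation by $\widetilde x$ in $\widetilde G$ restricts on the degree\nobreakdash-$1$ layer $G\times\{1\}$ to the map $y\mapsto s_x(y)$. Hence $(X,s)$ is itself (isomorphic to) a subquandle of the conjugation quandle $\Conj(\widetilde G)$, and the inner maps $s_x$ — and therefore the partition $\oo(X,s)$ into connected components — are literally the same whether $X$ is viewed inside $\Conj(G,\phi)$ or inside $\Conj(\widetilde G)$. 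So I would apply the classification from \cite{ta-conj} of good involutions of subquandles of conjugation quandles directly to $X\subseteq\Conj(\widetilde G)$, and then rewrite the resulting description in the coordinates of $\widetilde G=G\rtimes_\phi\Z$.

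\emph{Key steps.} By \cite{ta-conj}, every good involution of the conjugation subquandle $X\subseteq\Conj(\widetilde G)$ has the form $\rho(\widetilde x)=\eta(\widetilde x)\,\widetilde x\inv$, where $\eta$ sends $X$ into the set of elements of $\langle X\rangle_{\widetilde G}$ commuting with every element of $X$, is constant on each component of $\oo(X,s)$, and satisfies $\eta\rho=\eta$; conversely any such $\eta$ whose associated map lands back in $X$ gives a good involution. Since $\widetilde x$ has $\Z$\nobreakdash-degree $1$, the element $\eta(\widetilde x)$ has degree $2$; writing it in semidirect-product coordinates yields a unique $\psi(x)\in G$ for which $\eta(\widetilde x)\widetilde x\inv$ corresponds to $\psi(x)\phi(x\inv)\in X$, i.e.\ $\rho(x)=\psi(x)\phi(x\inv)$. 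It then remains to unwind the three conditions on $\eta$: (a) "\,$\eta(\widetilde x)$ commutes with the degree\nobreakdash-$1$ generators $\widetilde y$, $y\in X$\," translates, after cancelling the appropriate powers of $\phi$ from the $\widetilde G$\nobreakdash-multiplication, into the system $\phi^{2}(y)=\phi(\psi(x))\,y\,\psi(x)\inv$ for all $y\in X$, together with $\psi(x)\in\langle X\rangle$ — that is, precisely $\psi(x)\in S$; (b) "\,$\eta$ constant on $\oo(X,s)$\," becomes $\psi s_x=\psi$ for all $x$; (c) "\,$\eta\rho=\eta$\," becomes $\psi\rho=\psi$. The converse follows by running these equivalences backwards, or, if a self-contained argument is preferred, by verifying the good-involution axioms for $\rho(x):=\psi(x)\phi(x\inv)$ directly once the identities above are available.

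\emph{Main obstacle.} The computational heart is part (a): turning the single statement "\,$\eta(\widetilde x)$ lies in the centralizer of $X$ inside $\langle X\rangle_{\widetilde G}$\," into exactly the relation defining $S$. This demands care with the bookkeeping of $\widetilde G=G\rtimes_\phi\Z$ — which way $\Z$ acts and the left/right placement in the group law — so that the exponents of $\phi$ and the sides on which $\psi(x)^{\pm1}$ appear come out as in the definition of $S$ and not as some mirror image of it. A secondary subtlety that must be dispatched is why $\psi(x)$ lies in $\langle X\rangle$ and not merely in $G$: this is transparent when $e\in X$, for then $s_e=\phi|_X$ forces $X$, and hence $\langle X\rangle$, to be $\phi$\nobreakdash-invariant, so that $\psi(x)=\rho(x)\phi(x)\in\langle X\rangle$ automatically; the general case either reduces to this one or requires a short extra argument — which is presumably also the reason the statement flags a separate description of $S$ when $X$ contains the identity. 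Everything else — the identification $\oo(X\subseteq\Conj(\widetilde G))=\oo(X,s)$, and the passage between "good involution'' and the $\eta$‑data — is formal.
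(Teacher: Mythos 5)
Your proposal follows essentially the same route as the paper's proof: realize $\Conj(G,\phi)$ as a conjugation subquandle of the mapping torus $K=G\rtimes_\phi\Z$ (the paper uses Akita's embedding $g\mapsto(g,1)$ into $(\Conj K)\op$ composed with the inversion map to land in $\Conj K$, which resolves the left/right bookkeeping you flag), apply Theorem \ref{thm2} from \cite{ta-conj}, use the $\Z$-degree to force the central element attached to each point to sit in degree $2$ (Propositions \ref{prop:zh} and \ref{prop:prelim}), and translate back into semidirect-product coordinates. The two subtleties you single out — the orientation of the semidirect product and the membership $\psi(x)\in\langle X\rangle$ — are precisely the points the paper's own argument carries (the latter rather briefly, in the passage from $Z(H)$ to $S'$), so your outline matches the published proof step for step.
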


Theorem \ref{thm:main} strengthens several similar results in the literature on symmetric quandles.

\begin{example}
    Let $\phi:=\id_G$ be the identity map. Then $S=Z(\langle X\rangle)$, and $\Conj(G,\id_G)=(\Conj G)\op$ is isomorphic to $\Conj G$ via the inversion map $g\mapsto g\inv$. Therefore, Theorem \ref{thm:main} specializes to the statement of \cite{ta-conj}*{Thm.\ 1.3} with $X$ replaced by $X\inv$. Since we use the latter theorem to prove Theorem \ref{thm:main}, the two results are actually equivalent.
\end{example}

\begin{example}   
    If $G$ is abelian, then $\Conj(G,\phi)$ equals the Alexander quandle $\Alex(G,\phi)$. 
    Thus, Theorem \ref{thm:main} allows us to strengthen certain classification results of Taniguchi \cite{taniguchi} in the abelian case; see Section \ref{sec:alex}. 
    
    In particular, unlike some of the results in \cite{taniguchi}, Theorem \ref{cor:taniguchi} is not limited to connected Alexander quandles. We take advantage of this freedom to study good involutions of \emph{linear quandles}; see Section \ref{sec:linear}. In particular, we recover a result of Kamada and Oshiro \cite{symm-quandles-2}*{Thm.\ 3.2} for \emph{dihedral quandles}.
\end{example}

\subsection{Structure of the paper}
In Section \ref{sec:prelims}, we define good involutions of quandles. We provide several examples and preliminary results. In particular, we show that nontrivial free quandles have no good involutions (Example~\ref{ex:free-none}). This addresses Problems \ref{prob1} and \ref{prob2} for free quandles.

In Section \ref{sec:core}, we prove Theorem \ref{thm:main} (Theorem \ref{thm:main2}) using a result for conjugation quandles from \cite{ta-conj}. This addresses Problems \ref{prob1} and \ref{prob2} for twisted conjugation quandles. As an application, we obtain upper and lower bounds on the number of good involutions of certain subquandles of twisted conjugation quandles (Corollaries \ref{cor:good-bound}--\ref{cor:bnd1} and \ref{cor:conn-const}).

In Section \ref{sec:alex}, we completely describe good involutions of Alexander quandles (Theorem \ref{cor:taniguchi}) by reducing to a special case of Theorem \ref{thm:main}. This addresses Problems \ref{prob1} and \ref{prob2} for Alexander quandles, extending the results of \cite{taniguchi} in the abelian case without the assumption of connectedness. However, by adding this assumption, we strengthen another result of Taniguchi in the abelian case (Corollary \ref{cor:good-taka}). Along the way, we completely describe connected, involutory Alexander quandles (Theorem \ref{thm:conn-q}), which may be of independent interest.

In Section \ref{sec:linear}, we discuss Problems \ref{prob1} and \ref{prob2} for \emph{linear quandles}, a certain class of Alexander quandles having linear-algebraic applications to knot theory \cite{fox}. In particular, we compute the number of good involutions of the linear quandle $\Z[t^{\pm 1}]/(4n,t-2n-1)$, which grows arbitrarily large as $n$ tends toward infinity (Theorem \ref{thm:diverge}). This divergence is surprising given the usual difficulty of finding nonidentity good involutions of quandles (see, for example, \cite{galkin-symm}*{Sec.\ 5}). We also use a computer search to enumerate good involutions of nontrivial linear quandles up to order $29$ (Table \ref{tab:linear}) and provide explicit mappings for those up to order $23$ in a GitHub repository \cite{code}.

\begin{notation}
    Let $X$ be a set, and let $G$ be a group. This paper uses the following notation. \begin{itemize}
        \item Denote the composition of functions $\phi:X\to Y$ and $\psi:Y\to Z$ by $\psi\phi$. 
        \item Let $\id_X:X\to X$ be the identity map of $X$, and let $S_X$ be the symmetric group of $X$.
        \item If $X$ is contained in $G$, let $\langle X\rangle$ be the subgroup of $G$ generated by $X$, and define the set $X\inv:=\{x\inv\mid x\in X\}$.
        \item Denote the center of $G$ by $Z(G)$.
        \item Given a group automorphism $\phi\in\Aut G$, let $\Fix \phi\leq G$ be the subgroup of fixed points of~$\phi$.
    \end{itemize}
\end{notation}

\subsection*{Acknowledgments}
I thank Samantha Pezzimenti, Jose Ceniceros, and Peyton Wood for respectively introducing me to knot theory, quandles, and racks.

\section{Preliminaries}\label{sec:prelims}

\subsection{Quandles} We define quandles and give several examples.

\begin{definition}
    Let $X$ be a set, let $s:X\to S_X$ be a function, and write $s_x:=s(x)$ for all elements $x\in X$. We call the pair $(X,s)$ a \emph{quandle} if \begin{equation*}
		s_xs_y=s_{s_x(y)}s_x,\qquad s_x(x)=x
	\end{equation*}
    for all $x,y\in X$. We call $s$ the \emph{quandle structure} of $(X,s)$, and we say that $|X|$ is the \emph{order} of $(X,s)$. Some authors call the permutations $s_x$ \emph{point symmetries} of $(X,s)$.
\end{definition}

\begin{rmk}
    Some authors define quandles as sets $X$ with equipped with a nonassociative binary operation $\triangleright:X\times X\to X$ satisfying right-distributivity and other axioms. These two definitions are equivalent \citelist{\cite{survey}\cite{joyce}} via the formula
    \[
    s_y(x)=x\triangleright y.
    \]
    Some other authors define quandles using a left-distributive binary operation $\triangleright:X\times X\to X$. Of course, the left-distributive and right-distributive definitions are equivalent.
\end{rmk}

\begin{definition}
    Let $Q:=(X,s)$ be a quandle.
    \begin{itemize}
        \item If $Q$ is a quandle and $Y\subseteq X$ is a subset such that $s_y(Y)= Y$ for all $y\in Y$, then we say that the quandle $(Y,s|_Y)$ is a \emph{subquandle} of $Q$.\footnote{Some authors merely require that subquandles $Y$ satisfy $s_y(Y)\subseteq Y$ for all $y\in Y$. However, if $Y$ is an infinite set, then this weaker definition does not guarantee that $(Y,s|_Y)$ is a quandle; see \cite{cong}*{Subsec.\ 2.3}.} When there is no room for confusion, we may simply say that $Y$ is a subquandle of $Q$.
        \item We say that $Q$ is \emph{involutory} or a \emph{kei} if $s_x^2=\id_X$ for all $x\in X$. 
    \end{itemize}
\end{definition}

\begin{example}[\cite{fenn}]
    Given a set $X$, let $s:X\to S_X$ be the constant function $x\mapsto \id_X$. Then $(X,s)$ is a quandle called a \emph{trivial quandle}.
\end{example}

    \begin{example}[\citelist{\cite{joyce}\cite{fenn}\cite{conjugation}}]\label{def:conj}
    Let $G$ be a group, and define a quandle structure $s:G\rightarrow S_G$ on $G$ by \[s_g(h):= ghg\inv\]
    for all $g,h\in G$. 
	Then $\Conj G:=(G,s)$ is a quandle called a \emph{conjugation quandle} or \emph{conjugacy quandle}. 
    
    Note that $s_g\inv=s_{g\inv}$ for all $g\in G$, and $\Conj G$ is trivial if and only if $G$ is abelian. Moreover, a subset $X\subseteq G$ is a subquandle of $\Conj G$ if and only if $X$ is closed under conjugation by elements of the subgroup $\langle X\rangle\leq G$; see \cite{conjugation}*{Thm.\ 3.1} and cf.\ Example \ref{ex:free}.
\end{example}

\begin{rmk}
    Some authors define conjugation quandles via the formula $s_g(h):=g\inv hg$. This yields the \emph{dual quandle} $(\Conj G)\op$ of the conjugation quandle $\Conj G$ in Definition \ref{def:conj}; see Example \ref{ex:dual-twist-1}.
\end{rmk}

    \begin{example}[\citelist{\cite{embed}\cite{hopf}}]
    Let $G$ be a group, and let $\phi\in\Aut G$ be an automorphism of $G$. Define a quandle structure $s:G\to S_G$ on $G$ by \[s_g(h):= \phi(g\inv h)g\]
    for all $g,h\in G$. 
    Then $(G,s)$ is a quandle called a \emph{twisted conjugation quandle} or \emph{twisted homogeneous crossed set} and denoted by $\Conj(G,\phi)$. 
\end{example}

\begin{rmk}
    For the precise relationship between conjugation quandles $\Conj G$ and twisted conjugation quandles $\Conj(G,\phi)$, see Example \ref{ex:dual-twist}.
\end{rmk}

\begin{example}[\cite{alex}]
    If $A$ is an additive abelian group, then twisted conjugation quandles $\Conj(A,\phi)$ are called \emph{Alexander quandles} or \emph{affine quandles} and denoted by $\Alex(A,\phi)$. The theory and applications of Alexander quandles are well-developed; see, for example,  \citelist{\cite{alex}\cite{hulpke}\cite{cong}\cite{fox}\cite{sim}\cite{hou}}.

    In particular, let $\phi$ be the inversion map $a\mapsto -a$. Then $\Alex(A,\phi)$ is a kei called a \emph{Takasaki kei} and denoted by $T(A)$. Moreover, if $A$ is cyclic of order $n$, then we call $T(\Z/n\Z)$ a \emph{dihedral quandle} and denote it by $R_n$.
\end{example}

\begin{rmk}[\citelist{\cite{cong}\cite{alex}\cite{hou}}]\label{rmk:module}
    Consider the ring $\Z[t^{\pm 1}]$ of integral Laurent polynomials. 
    Given an additive abelian group $A$, a choice of automorphism $\phi\in\Aut A$ is equivalent to a choice of a $\Z[t^{\pm 1}]$-module structure on $A$, namely $ta:=\phi(a)$ for all $a\in A$. 
    
    Authors often alternate between the group-theoretic and module-theoretic points of view when studying Alexander quandles. Indeed, isomorphisms of $\Z[t^{\pm 1}]$-modules induce isomorphisms of Alexander quandles, but not vice versa; see \cite{alex}.
\end{rmk}

\begin{example}[\citelist{\cite{taniguchi}\cite{hopf}}]\label{ex:alex}
    Let $G$ be a group, and let $\phi\in\Aut G$ be an automorphism of $G$. The \emph{generalized Alexander quandle} $\operatorname{GAlex}(G,\phi)$, which Andruskiewitsch and Gra\~na \cite{hopf} introduced under the name \emph{principal homogeneous crossed sets}, is defined to be the quandle with underlying set $G$ and quandle structure
    \[
    s_g(h):=\phi(hg\inv)g.
    \]
\end{example}

\begin{rmk}[\cite{embed}]
    Like with twisted conjugation quandles $\Conj(G,\phi)$, taking $G$ to be abelian recovers the definition of an Alexander quandle. However, $\operatorname{GAlex}(G,\phi)$ and $\Conj(G,\phi)$ are generally distinct.
\end{rmk}

\begin{example}[\citelist{\cite{joyce}\cite{fenn}\cite{free}}]\label{ex:free}
    Let $X$ be a set. The \emph{free quandle} generated by $X$, denoted by $\Free X$, is defined to be the image of $X$ under the left adjoint of the forgetful functor from the category of quandles to the category of sets. For example, if $|X|\leq 1$, then $|{\Free X}|=|X|$.

    In 1982, Joyce \cite{joyce}*{Thm.\ 4.1} gave the following explicit construction of $\Free X$. Let $F(X)$ be the free group generated by $X$, and let $\widetilde{X}$ be the set of conjugates of elements of $X$ in $F(X)$. Then $\widetilde{X}$ is a subquandle of $\Conj F(X)$, and $\widetilde{X}\cong \Free X$. In particular, $\Free X$ is a trivial quandle if and only if $|X|\leq 1$.
\end{example}

\subsection{Quandle homomorphisms}
Quandles form a category with the following morphisms. 

\begin{definition}
    Given two quandles $(X,s)$ and $(Y,t)$, we say that a map $\varphi:X\to Y$ is a \emph{quandle homomorphism} if \[\varphi s_x=t_{\varphi(x)}\varphi\] for all $x\in X$. A \emph{quandle isomorphism} is a bijective quandle homomorphism, and a \emph{quandle automorphism} of is an isomorphism from a quandle to itself. Let $\Aut (X,s)$ denote the group of quandle automorphisms of $(X,s)$.
\end{definition}

\begin{definition}
    Given a quandle $Q=(X,s)$, we define the \emph{inner automorphism group}\footnote{Depending on choices of convention, some authors call $\Inn Q$ the \emph{right multiplication group}, \emph{left multiplication group}, or \emph{operator group} of $Q$.} of $Q$, denoted by $\Inn Q$, to be the subgroup of $S_X$ generated by the set $s(X)$:
    \[
    \Inn Q:=\langle s_x\mid x\in X\rangle.
    \] 
    In fact, it is straightforward to verify that $\Inn Q$ is a normal subgroup of $\Aut Q$.
\end{definition}

\begin{example}[\cite{fenn}*{Sec.\ 1.3}]
    Let $G$ be a group, let $\Conj X$ be a subquandle of $\Conj G$, and let $H:=\langle X\rangle$ be the subgroup of $G$ generated by $X$. Then $\Inn(\Conj X)$ equals the inner automorphism group $\Inn H$ of the group $H$.
\end{example}

\subsubsection{Connected quandles}

\begin{definition}[\cite{orbits}]
    Given a quandle $Q=(X,s)$, let $\oo(Q)$ denote the set of orbits of $X$ under the action of $\Inn Q$. An element of $\oo(Q)$ is called a \emph{connected component} of $Q$.
\end{definition}

\begin{definition}
    We say that a quandle $Q=(X,s)$ is \emph{connected} or \emph{indecomposable} if $Q$ has only one connected component, that is, if the action of $\Inn Q$ on $X$ is transitive.
\end{definition}

\begin{example}[\cite{conjugation}*{Thm.\ 3.2}]\label{lem:conn}
    If $X$ is a subquandle of a conjugation quandle $\Conj G$, then $X$ is connected if and only if $X$ is a conjugacy class in the subgroup $\langle X\rangle\leq G$. In particular, $\Conj G$ is connected if and only if $G$ is the trivial group.
\end{example}

The following example of connected quandles is a well-known result proven in \cite{hopf}*{Subsec.\ 1.3.8} and reproven in \cite{hulpke}*{Cor.\ 7.2} and \cite{cong}*{Sec.\ 7}.

\begin{example}[\cite{hopf}*{Subsec.\ 1.3.8}]\label{lem:conn-alex}
    Given an Alexander quandle $\Alex(A,\phi)$, define the function $\alpha:A\to A$ by
    \[
    \alpha(a):=a-\phi(a)
    \]
    for all $a\in A$.
    Then $\Alex(A,\phi)$ is connected if and only if $\alpha$ is surjective. For more classification results for connected Alexander quandles, see \citelist{\cite{hou}\cite{alex}\cite{orbits}\cite{hopf}\cite{sim}}.
\end{example}

\subsection{Symmetric quandles} 

This paper studies \emph{symmetric quandles}, which Kamada \cite{symm-quandles} introduced in 2007.

\begin{definition}[\citelist{\cite{symm-quandles}\cite{symm-quandles-2}}]\label{def:symmetric}
    Let $Q=(X,s)$ be a quandle. A \emph{good involution} of $Q$ is an involution $\rho\in S_X$ that satisfies the equalities
    \[
    \rho s_x=s_x\rho,\qquad s_{\rho(x)}=s\inv_x
    \]
    for all $x\in X$. (Note that $\rho$ is not required to be a quandle endomorphism.) We call the pair $(Q,\rho)$ a \emph{symmetric quandle}, and we denote the set of all good involutions of $Q$ by $\Good Q$. 
\end{definition}

\begin{obs}\label{obs:iso-good}
    The set of good involutions is a quandle invariant.
    That is, if $\theta:Q_1\bij Q_2$ is a quandle isomorphism, then the assignment $\rho\mapsto \theta\rho\theta\inv$ is a bijection from $\Good Q_1$ to $\Good Q_2$.
\end{obs}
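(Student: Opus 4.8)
The plan is to check directly that conjugation by $\theta$ transports the two defining equations of a good involution from $Q_1$ to $Q_2$, and that conjugation by $\theta\inv$ does the same in the other direction; since these two operations are visibly mutually inverse on permutation groups, the bijection follows. Write $Q_1=(X,s)$ and $Q_2=(Y,t)$. The hypothesis that $\theta$ is a quandle isomorphism says $\theta s_x=t_{\theta(x)}\theta$ for all $x\in X$, equivalently $t_{\theta(x)}=\theta s_x\theta\inv$, and this conjugation relation is the only input the argument needs.

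First I would fix $\rho\in\Good Q_1$ and set $\rho':=\theta\rho\theta\inv\in S_Y$. It is an involution because $(\rho')^2=\theta\rho^2\theta\inv=\theta\,\id_X\,\theta\inv=\id_Y$. To see that $\rho'$ commutes with every $t_y$, I would write an arbitrary $y\in Y$ as $y=\theta(x)$ and compute
\[
\rho' t_y=\theta\rho\theta\inv\cdot\theta s_x\theta\inv=\theta(\rho s_x)\theta\inv=\theta(s_x\rho)\theta\inv=\theta s_x\theta\inv\cdot\theta\rho\theta\inv=t_y\rho',
\]
using $\rho s_x=s_x\rho$ in the middle. For the second axiom, note $\rho'(y)=\theta\rho\theta\inv(\theta(x))=\theta(\rho(x))$, so
\[
t_{\rho'(y)}=t_{\theta(\rho(x))}=\theta s_{\rho(x)}\theta\inv=\theta s_x\inv\theta\inv=(\theta s_x\theta\inv)\inv=t_y\inv,
\]
using $s_{\rho(x)}=s_x\inv$. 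Hence $\rho'\in\Good Q_2$, so $\rho\mapsto\theta\rho\theta\inv$ is a well-defined map $\Good Q_1\to\Good Q_2$.

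Finally, applying the previous paragraph verbatim to the quandle isomorphism $\theta\inv:Q_2\bij Q_1$ yields a map $\Good Q_2\to\Good Q_1$ given by $\sigma\mapsto\theta\inv\sigma\theta$, and the two maps are mutually inverse by the evident identities $\theta\inv(\theta\rho\theta\inv)\theta=\rho$ and $\theta(\theta\inv\sigma\theta)\theta\inv=\sigma$. Therefore $\rho\mapsto\theta\rho\theta\inv$ is a bijection $\Good Q_1\bij\Good Q_2$. I do not expect a genuine obstacle here: the whole argument is a formal manipulation of the relation $t_{\theta(x)}=\theta s_x\theta\inv$, and the only point requiring a little care is to rewrite each element of $Y$ in the form $\theta(x)$ before invoking that relation and the good-involution axioms of $\rho$.
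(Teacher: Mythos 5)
Your proof is correct and is exactly the routine conjugation argument the paper has in mind: the paper states this as an Observation with no written proof, and your direct verification of the two good-involution axioms via $t_{\theta(x)}=\theta s_x\theta\inv$, together with the inverse map $\sigma\mapsto\theta\inv\sigma\theta$, is the intended justification. The only step you invoke implicitly is that $\theta\inv$ is again a quandle isomorphism, which is standard and immediate from the same conjugation relation.
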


\begin{example}[\cite{symm-quandles-2}*{Prop.\ 3.1}]\label{ex:trivial}
    Given a quandle $Q=(X,s)$, let $I\subset S_X$ the set of all involutions of $X$. Then $\Good Q=I$ if and only if $Q$ is a trivial quandle.
\end{example}

\begin{example}[\cite{symm-quandles}*{Ex.\ 2.4}]\label{ex:conjinv}
    Let $G$ be a group. Then the inversion map $g\mapsto g\inv$ is a good involution of $\Conj G$.
\end{example}

\begin{example}
    Let $Q=\mathrm{GAlex}(G,\phi)$ be a generalized Alexander quandle. A theorem of Taniguchi \cite{taniguchi}*{Thm.\ 1.3} states that $\Good Q$ is nonempty if and only if $Q$ is a kei.
\end{example}

To prove Theorem \ref{thm:main}, we use the following characterization of good involutions of subquandles of conjugation quandles from \cite{ta-conj}.

\begin{thm}[\cite{ta-conj}*{Thm.\ 1.3}]\label{thm2}
    Let $H$ be a group, and let $(Y,t)$ be a subquandle of $\Conj H$. 
    Then for all functions $\rho:Y\to Y$, $\rho$ is a good involution of $\Conj Y$ if and only if there exists a function $\zeta:Y\to Z(\langle Y\rangle)$ such that
    \[
    \rho(y)=(\zeta(y)y)\inv,\qquad\zeta t_y=\zeta=\zeta\rho
    \]
    for all $y\in Y$.
\end{thm}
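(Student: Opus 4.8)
The plan is to prove both directions by a direct computation organized around the single quantity $y\rho(y)$. First I would unpack the three defining conditions of a good involution of $(Y,t)$ in the conjugation setting, where $t_y(z)=yzy\inv$: namely $\rho^2=\id_Y$, the commutation $\rho t_y=t_y\rho$, and the reversal $t_{\rho(y)}=t_y\inv$. The reversal condition reads $\rho(y)z\rho(y)\inv=y\inv zy$ for all $y,z\in Y$, which rearranges to $(y\rho(y))z(y\rho(y))\inv=z$. Thus $y\rho(y)$ commutes with every element of $Y$; since $y,\rho(y)\in Y\subseteq\langle Y\rangle$ and an element commuting with all generators commutes with the whole group, this is equivalent to $y\rho(y)\in Z(\langle Y\rangle)$. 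This pins down the correct candidate: set $\zeta(y):=(y\rho(y))\inv$, so that $\zeta(y)$ is central exactly when the reversal condition holds, while the stated formula $\rho(y)=(\zeta(y)y)\inv$ becomes a tautology because $\zeta(y)y=\rho(y)\inv$.

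For the necessity direction I would assume $\rho$ is a good involution, define $\zeta$ as above, and verify the two invariances. For $\zeta t_y=\zeta$, I compute $\zeta(yzy\inv)=(yzy\inv\cdot\rho(yzy\inv))\inv$ and use $\rho t_y=t_y\rho$ to replace $\rho(yzy\inv)$ by $y\rho(z)y\inv$; the conjugating factors telescope to give $y\zeta(z)y\inv$, which equals $\zeta(z)$ by centrality. For $\zeta\rho=\zeta$, I use $\rho^2=\id_Y$ together with the observation that $y\rho(y)$ central forces $\rho(y)y=y\rho(y)$, so that $\zeta(\rho(y))=(\rho(y)y)\inv=(y\rho(y))\inv=\zeta(y)$.

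For the sufficiency direction I would assume a function $\zeta:Y\to Z(\langle Y\rangle)$ with $\rho(y)=(\zeta(y)y)\inv=\zeta(y)\inv y\inv$ and the two invariances, then check each good-involution axiom by pushing the central factors through. The reversal $t_{\rho(y)}=t_y\inv$ follows from centrality of $\zeta(y)$ alone, since $\zeta(y)\inv y\inv z\, y\zeta(y)=y\inv zy$. The commutation $\rho t_y=t_y\rho$ follows from $\zeta t_y=\zeta$ after moving the central factor $\zeta(z)\inv$ past the conjugation by $y$. And $\rho^2=\id_Y$ follows from $\zeta\rho=\zeta$, which makes the two central factors cancel in $\rho(\rho(y))=\zeta(\rho(y))\inv\rho(y)\inv=\zeta(y)\inv\,y\zeta(y)=y$. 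Finally, since $\rho^2=\id_Y$ and $\rho$ maps $Y$ into $Y$, $\rho$ is automatically a bijection, hence $\rho\in S_Y$ as the definition of a good involution requires.

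Because every computation is short, I do not expect a serious obstacle; the one genuine point of care is the passage from ``$y\rho(y)$ commutes with every element of $Y$'' to ``$y\rho(y)\in Z(\langle Y\rangle)$'', which relies both on $Y$ generating $\langle Y\rangle$ and on $y\rho(y)$ itself lying in $\langle Y\rangle$. The bookkeeping choice $\zeta(y)=(y\rho(y))\inv$ rather than $y\rho(y)$ is what makes the stated formula $\rho(y)=(\zeta(y)y)\inv$ emerge cleanly, and tracking which good-involution axiom yields which $\zeta$-invariance (reversal $\leftrightarrow$ centrality, commutation $\leftrightarrow$ $\zeta t_y=\zeta$, involutivity $\leftrightarrow$ $\zeta\rho=\zeta$) is the only place where I would be careful to keep the argument non-circular.
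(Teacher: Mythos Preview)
Your proof is correct. Note, however, that the paper does not itself prove this statement: it is quoted verbatim from \cite{ta-conj}*{Thm.\ 1.3} and used as a black box in the proof of Theorem~\ref{thm:main2}. So there is no ``paper's own proof'' to compare against here. Your direct verification is exactly the kind of argument one would expect the cited paper to contain, organized cleanly around the correspondence $\zeta(y)=(y\rho(y))\inv$ and the three-way dictionary (reversal $\leftrightarrow$ centrality of $\zeta$, commutation $\leftrightarrow$ $\zeta t_y=\zeta$, involutivity $\leftrightarrow$ $\zeta\rho=\zeta$).
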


\begin{example}\label{ex:free-none}
    In the setting of Theorem \ref{thm2}, one can show that if $\langle Y\rangle$ is centerless, then
    \[
    \Good (Y,t)=\begin{cases}
        \{\iota|_Y\}&\text{if }Y=Y\inv,\\
        \emptyset &\text{otherwise,}
    \end{cases}
    \]
    where $\iota:H\to H$ denotes the inversion map $h\mapsto h\inv$; see \cite{ta-conj}*{Cor.\ 6.5}.
    
    In particular, this shows that nontrivial free quandles have no good involutions. Let $Y$ be a set, and let $F(Y)$ be the free group generated by $Y$. By Example \ref{ex:free}, we may identify the free quandle $\Free Y$ with the subquandle of $\Conj F(Y)$ whose elements are conjugates of elements of $Y$. In particular, $\langle \Free Y\rangle=F(Y)$. 
    
    Recall that $\Free Y$ is nontrivial if and only if $|Y|\geq 2$. In this case, $\langle \Free Y\rangle$ is centerless. But clearly $Y\neq Y\inv$ as subsets of $F(Y)$, so $\Good (\Free Y)=\emptyset$. In fact, since all subquandles of free quandles are free \cite{free}, this fact is enough to characterize good involutions of all subquandles of free quandles.
\end{example}

\subsubsection{Dual quandles}

\begin{definition}[\cite{cong}]
    Given a quandle $Q=(X,s)$, define $s\inv:X\to S_X$ by $x\mapsto s_x\inv$. Then the quandle $Q\op:=(X,s\inv)$ is a quandle called the \emph{dual quandle} of $R$.
\end{definition}

\begin{example}\label{ex:dual-twist-1}
    The dual quandle $(\Conj G)\op$ of a conjugation quandle $\Conj G$ is the quandle with underlying set $G$ and quandle structure $g\mapsto s_g\inv$, where
    \[
    s_g\inv(h)=s_{g\inv}(h)=g\inv hg
    \]
    for all $g,h\in G$.
\end{example}

\begin{example}\label{ex:dual-twist}
    The dual quandle $(\Conj(G,\phi))\op$ of a twisted conjugation quandle $\Conj(G,\phi)$ has the quandle structure defined by
    \[
    s_g\inv(h)=g\phi\inv(hg\inv)
    \]
    for all $g,h\in G$. In particular, by taking $\phi$ to be the identity map $\id_G$, we obtain
    \[
    (\Conj(G,\id_G))\op=\Conj G,
    \]
    which is also evident from Example \ref{ex:dual-twist-1}.
\end{example}

\begin{example}[\cite{ta-conj}*{Sec.\ 4}]\label{ex:gis-are-antis}
    Given a quandle $Q$, it is straightforward to verify that good involutions $\rho$ of $Q$ are a special class of quandle \emph{antiautomorphisms}, that is, quandle isomorphisms $\rho:Q\bij Q\op$. In particular, not all quandles have good involutions.
\end{example}

\section{Symmetric twisted conjugation subquandles}\label{sec:core}
In this section, we prove Theorem \ref{thm:main} (Theorem \ref{thm:main2}) and several bounds on the number of good involutions of subquandles of twisted conjugation quandles.

Throughout this section, let $G$ be a group, let $\phi\in\Aut G$ be an automorphism of $G$, and let $(X,s)$ be a subquandle of the twisted conjugation quandle $\Conj(G,\phi)$.

\subsection{Setup for the proof}\label{subsec:A} 
To prove Theorem \ref{thm:main}, we apply Observation \ref{obs:iso-good} and Theorem \ref{thm:main2} to a certain subquandle $(Y,t)$ of a conjugation quandle $\Conj K$ such that $(X,s)\cong (Y,t)$.

\subsubsection{Twisted conjugation quandles as conjugation subquandles}

Let $K$ be the semidirect product
\[K:=G\rtimes_\phi \Z
\]
given by the right action $g\cdot n:=\phi^n(g)$ of $\Z$ on $G$.\footnote{Some geometric group theorists call $K$ a \emph{mapping torus}, especially if $G$ is torsion-free; see \cite{mapping}.} Then multiplication and inversion in $K$ are given by
\[
(g,m)(h,n)=(\phi^n(g)h,m+n),\qquad (g,m)\inv=(\phi^{-m}(g\inv),-m),
\]
respectively. 
In 2023, Akita \cite{embed} showed that the function $\eta:G\to K$ defined by
\begin{equation}\label{eq:phi}
    \eta(g):=(g,1)
\end{equation}
is a quandle embedding of $\Conj(G,\phi)$ into $(\Conj K)\op$. Let $\iota:K\to K$ denote the inversion map $(g,m)\mapsto (g,m)\inv$. By Examples \ref{ex:conjinv} and \ref{ex:gis-are-antis}, the composition $\theta:=\iota\eta$ is a quandle embedding of $\Conj(G,\phi)$ into $\Conj K$. 

Henceforth, let $Y:=\theta(X)$ denote the image of $X$ under the quandle embedding $\theta$:
\[
Y=\{(x,1)\inv\mid x\in X\}=\{(\phi\inv(x\inv),-1)\mid x\in X\}.
\] Let $t$ denote the quandle structure of $Y$ when viewed as a subquandle of $\Conj K$. Also, let $H:=\langle Y\rangle$ be the subgroup of $K$ generated by $Y$. For example, if $X=G$, then $H=K$.

In the following, we apply Theorem \ref{thm2} to $(Y,t)$. We then apply Observation \ref{obs:iso-good} to the quandle isomorphism $\theta:X\bij Y$ to prove Theorem \ref{thm:main}. 

\subsubsection{Calculation of the center} To apply Theorem \ref{thm2} to $(Y,t)$, we first compute $Z(H)$.

\begin{prop}\label{prop:zh}
    The center of $H$ is
    \[
    Z(H)=\{(x,n)\in H\mid \phi^n(y)=\phi(x)yx\inv\text{ for all }y\in X\}. 
    \]
    In particular, if $X$ contains the identity element of $G$, then the center of $K$ is
    \[
    Z(H)=\{(x,n)\in (\Fix\phi\cap\langle X\rangle)\times\Z\mid \phi^n(y)=xyx\inv\text{ for all }y\in X\}.
    \]
\end{prop}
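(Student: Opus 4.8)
The plan is to work inside the concrete group $K=G\rtimes_\phi\Z$ of Subsection~\ref{subsec:A} and to test centrality in $H=\langle Y\rangle$ against the generating set $Y$. A first observation, from the inversion formula in $K$, is that $Y\inv=\{(x,1)\mid x\in X\}$; these elements (together with those of $Y$) will be used both to derive the formulas and to certify membership in $H$. Now an element $(x,n)\in H$ lies in $Z(H)$ exactly when it commutes with every generator $(\phi\inv(y\inv),-1)$, $y\in X$. Using $(g,m)(h,k)=(\phi^k(g)h,m+k)$, a direct computation gives $(x,n)(\phi\inv(y\inv),-1)=(\phi\inv(xy\inv),n-1)$ and $(\phi\inv(y\inv),-1)(x,n)=(\phi^{n-1}(y\inv)x,n-1)$; equating first coordinates and applying $\phi$ turns the commutation relation into $\phi^n(y)=\phi(x)yx\inv$. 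This proves the first displayed equality, since for an element already known to lie in $H$, being central is the same as commuting with the generating set $Y$.

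For the second equality, assume $e\in X$. Substituting $y=e$ into $\phi^n(y)=\phi(x)yx\inv$ forces $\phi(x)=x$, i.e.\ $x\in\Fix\phi$, after which the relation reduces to $\phi^n(y)=xyx\inv$. It remains to control the first coordinates of elements of $H$. I would first show that $\langle X\rangle$ is $\phi$-invariant: since $e\in X$ and $(X,s)$ is a subquandle, the identities $s_x(e)=\phi(x\inv)x\in X$ and $s_x\inv(e)=x\phi\inv(x\inv)\in X$ yield $\phi(x),\phi\inv(x)\in\langle X\rangle$ for all $x\in X$, whence $\phi(\langle X\rangle)=\langle X\rangle$. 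Consequently $H\leq\langle X\rangle\rtimes_\phi\Z$, so $(x,n)\in Z(H)$ forces $x\in\langle X\rangle$; this gives the forward inclusion. For the reverse inclusion I would check that the candidate elements genuinely lie in $H$: powers of $(e,1)\in Y\inv$ give $(e,n)\in H$ for all $n\in\Z$; the products $(x,1)(e,-1)=(\phi\inv(x),0)$ together with the $\phi$-invariance of $\langle X\rangle$ give $\langle X\rangle\times\{0\}\subseteq H$; hence $(x_0,n)=(x_0,0)(e,n)\in H$ for every $x_0\in\Fix\phi\cap\langle X\rangle$. The commutation computation above (with $\phi(x_0)=x_0$) then places any such $(x_0,n)$ satisfying $\phi^n(y)=x_0yx_0\inv$ for all $y\in X$ into $Z(H)$.

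The commutator computations in $K$ are routine bookkeeping with the semidirect-product conventions. The one point requiring care is verifying that the obvious candidates for central elements actually belong to the generated subgroup $H$ — namely the membership facts $(e,n)\in H$ and $\langle X\rangle\times\{0\}\subseteq H$ — which is precisely where the $\phi$-invariance of $\langle X\rangle$, and hence the hypothesis $e\in X$, is used. I do not anticipate any genuine obstacle beyond this.
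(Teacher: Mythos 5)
Your proof is correct and follows essentially the same route as the paper: test centrality of $(x,n)$ against a generating set of $H$ (you use $Y$ where the paper uses $Y\inv$, an immaterial difference), derive $\phi^n(y)=\phi(x)yx\inv$, and then set $y=e$ to get $x\in\Fix\phi$. You additionally spell out the membership facts ($\phi$-invariance of $\langle X\rangle$, hence $H=\langle X\rangle\rtimes_\phi\Z$ when $e\in X$) that the paper dismisses as straightforward; these checks are accurate and, if anything, make the argument more complete than the published one.
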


\begin{proof}
    Since $Y\inv=\{(x,1)\mid x\in X\}$ generates $H$, verifying that elements of the right-hand side lie in $Z(H)$ is straightforward. Conversely, let $(x,n)\in Z(H)$, so that
    \[
    (\phi(x)y,n+1)=(x,n)(y,1)=(y,1)(x,n)=(\phi^n(y)x,n+1)
    \]
    for all generators $(y,1)\in Y\inv$. Then $\phi^n(y)=\phi(x)yx\inv$ for all $y\in X$, as desired. In particular, if $1\in X$, then $H$ contains the set $\langle X\rangle \times \Z$, so we can take $y:=1$ above to obtain $x\in\Fix\phi$.
\end{proof}

\subsubsection{Preliminary versions of Theorem \ref{thm:main}}

First, we record a special case of Theorem \ref{thm2}.

\begin{lemma}\label{lem:prelim-conj}
    For all functions $\rho':Y\to Y$, $\rho'$ is a good involution of $\Conj Y$ if and only if there exists a function $\zeta:Y\to Z(H)$ such that
    \[
    \rho'((x,1)\inv)=\zeta((x,1)\inv)\inv(x,1),\qquad\zeta t_{(x,1)\inv}=\zeta=\zeta\rho'
    \]
    for all $(x,1)\inv\in Y$.
\end{lemma}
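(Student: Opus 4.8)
The plan is to obtain Lemma \ref{lem:prelim-conj} as a direct specialization of Theorem \ref{thm2}, with the group $H$ in place of the abstract group there and the subquandle $(Y,t)$ of $\Conj K$ (hence of $\Conj H$, since $H=\langle Y\rangle$) in place of $(Y,t)$ there. First I would invoke Theorem \ref{thm2}: for any $\rho':Y\to Y$, $\rho'$ is a good involution of $\Conj Y$ if and only if there exists $\zeta:Y\to Z(\langle Y\rangle)$ with $\rho'(y)=(\zeta(y)y)\inv$ and $\zeta t_y=\zeta=\zeta\rho'$ for all $y\in Y$. Since $\langle Y\rangle=H$ by definition, the codomain of $\zeta$ is exactly $Z(H)$, which is computed in Proposition \ref{prop:zh}; no work is needed there beyond citing it.

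The only remaining task is cosmetic: rewrite the identity $\rho'(y)=(\zeta(y)y)\inv$ in terms of the explicit parametrization $y=(x,1)\inv$ of elements of $Y$, where $x$ ranges over $X$. Using the inversion formula $(g,m)\inv=(\phi^{-m}(g\inv),-m)$ in $K$ recalled in Subsection \ref{subsec:A}, every element of $Y$ is of the form $(x,1)\inv$ for a unique $x\in X$ (because $\theta$ is a bijection $X\bij Y$), so quantifying over $y\in Y$ is the same as quantifying over $(x,1)\inv$ with $x\in X$. Then I would compute
\[
\bigl(\zeta((x,1)\inv)\,(x,1)\inv\bigr)\inv=(x,1)\,\zeta((x,1)\inv)\inv=\zeta((x,1)\inv)\inv(x,1),
\]
where the last equality holds because $\zeta((x,1)\inv)\in Z(H)$ commutes with $(x,1)\in H$ (note $(x,1)\in Y\inv\subseteq H$). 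This is exactly the displayed formula in the statement. The conditions $\zeta t_{(x,1)\inv}=\zeta=\zeta\rho'$ are just the conditions $\zeta t_y=\zeta=\zeta\rho'$ rewritten with $y=(x,1)\inv$, so nothing changes there.

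There is essentially no obstacle here: the lemma is a bookkeeping reformulation of Theorem \ref{thm2} once $Z(H)$ is known. The one point requiring a word of care is the reordering $(x,1)\zeta((x,1)\inv)\inv=\zeta((x,1)\inv)\inv(x,1)$, which uses centrality of the value of $\zeta$ in $H$ together with the fact that $(x,1)$ lies in $H$ (equivalently, in $Y\inv$); I would state this explicitly. I would also note, for clarity, that $\Conj Y$ and $(Y,t)$ denote the same quandle — the subquandle of $\Conj K$ on the set $Y$ — matching the notation of Theorem \ref{thm2}.
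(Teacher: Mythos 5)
Your proof is correct and takes the same route as the paper, which simply records this lemma as an immediate special case of Theorem \ref{thm2} without further argument. Your added details — the parametrization $y=(x,1)\inv$ and the commutation $(x,1)\,\zeta((x,1)\inv)\inv=\zeta((x,1)\inv)\inv(x,1)$ justified by centrality of $\zeta(y)$ in $H$ — are exactly the bookkeeping the paper leaves implicit.
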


We refine Lemma \ref{lem:prelim-conj} as follows. As in Theorem \ref{thm:main}, define the set
\begin{equation*}
	S:=\{x\in \langle X\rangle \mid \phi^{2}(y)=\phi(x)yx\inv \text{ for all }y\in X\}.
\end{equation*}
In particular, if $X$ contains the identity element $1\in G$, then taking $y:=1$ as in the end of proof of Proposition \ref{prop:zh} shows that
\begin{equation}\label{eq:s2}
    S=\{x\in \Fix \phi\cap\langle X\rangle\mid \phi^2(y)=xyx\inv\text{ for all }y\in X\}.
\end{equation}
Also, define the set
\[
    S':=\{(x,2)\mid x\in S \}\cap H.
\]

\begin{prop}\label{prop:prelim}
    Lemma \ref{lem:prelim-conj} holds when $Z(H)$ is replaced with $S'$.
\end{prop}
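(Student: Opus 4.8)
The plan is to read Proposition~\ref{prop:prelim} straight off Lemma~\ref{lem:prelim-conj} by pinning down exactly where the auxiliary function $\zeta$ is allowed to take values. For the ``if'' direction it suffices to observe that $S'\subseteq Z(H)$: if $(x,2)\in S'$ then $x\in S$, so $\phi^{2}(y)=\phi(x)yx\inv$ for every $y\in X$, and $(x,2)\in H$; by Proposition~\ref{prop:zh} these two conditions are precisely what it means for $(x,2)$ to lie in $Z(H)$. Hence any $\zeta\colon Y\to S'$ satisfying $\rho'((x,1)\inv)=\zeta((x,1)\inv)\inv(x,1)$ and $\zeta t_{(x,1)\inv}=\zeta=\zeta\rho'$ is in particular a function $Y\to Z(H)$ satisfying the same identities, so Lemma~\ref{lem:prelim-conj} already forces $\rho'$ to be a good involution of $\Conj Y$.

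For the ``only if'' direction, let $\rho'$ be a good involution of $\Conj Y$ and let $\zeta\colon Y\to Z(H)$ be the function supplied by Lemma~\ref{lem:prelim-conj}; the relation $\rho'((x,1)\inv)=\zeta((x,1)\inv)\inv(x,1)$ determines $\zeta$ uniquely, so the task is to check that this $\zeta$ has image in $S'$. Fix $(x,1)\inv\in Y$ and, using that $\rho'$ is a bijection of $Y$, write $\rho'((x,1)\inv)=(x',1)\inv$ with $x'\in X$. Solving the defining relation for $\zeta((x,1)\inv)$ via the multiplication and inversion laws of $K=G\rtimes_{\phi}\Z$ yields $\zeta((x,1)\inv)=(x,1)(x',1)=(\phi(x)x',\,2)$; in particular $\zeta((x,1)\inv)\in H$ has second coordinate $2$. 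Feeding $(\phi(x)x',2)\in Z(H)$ into Proposition~\ref{prop:zh} rewrites the centrality condition as $\phi^{2}(y)=\phi(\phi(x)x')\,y\,(\phi(x)x')\inv$ for all $y\in X$, and once one knows in addition that $\phi(x)x'\in\langle X\rangle$ this says exactly that $\phi(x)x'\in S$, so $\zeta((x,1)\inv)=(\phi(x)x',2)\in\{(z,2)\mid z\in S\}\cap H=S'$. The remaining identities $\zeta t_{(x,1)\inv}=\zeta=\zeta\rho'$ are untouched, completing the replacement of $Z(H)$ by $S'$.

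Two small calculations carry the argument: the group computation producing $\zeta((x,1)\inv)=(\phi(x)x',2)$, and the rewriting via Proposition~\ref{prop:zh}; both are routine but need the semidirect-product formula $(g,m)(h,n)=(\phi^{n}(g)h,\,m+n)$ handled with care about which powers of $\phi$ appear. The step I expect to be the real obstacle is verifying that the first coordinate $\phi(x)x'$ indeed lies in $\langle X\rangle$, since this is exactly what separates $S'$ from the a priori larger slice $Z(H)\cap(G\times\{2\})$. When $X$ contains the identity of $G$ this is immediate from the second description of $Z(H)$ in Proposition~\ref{prop:zh} (every element of $Z(H)$ then has first coordinate in $\Fix\phi\cap\langle X\rangle$), and in general one should extract it from the subquandle axioms for $(X,s)$ together with the explicit forms of $Y$ and $H=\langle Y\rangle$.
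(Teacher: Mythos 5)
Your argument follows the paper's own proof almost step for step: the forward direction is the observation $S'\subseteq Z(H)$ via Proposition~\ref{prop:zh}, and for the converse both you and the paper pin down the second coordinate of $\zeta((x,1)\inv)$ as $2$ using the fact that $\rho'$ must land back in $Y$, all of whose elements have second coordinate $-1$. Your version of that computation, writing $\rho'((x,1)\inv)=(x',1)\inv$ and solving $\zeta((x,1)\inv)=(x,1)(x',1)=(\phi(x)x',2)$, is correct and in fact slightly more informative than the paper's, since it exhibits the first coordinate explicitly as $\phi(x)x'$ with $x,x'\in X$.

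However, the step you flag as ``the real obstacle'' is a genuine gap, and it cannot be extracted from the subquandle axioms in general. Membership of $\phi(x)x'$ in $\langle X\rangle$ is equivalent to $\phi(x)\in\langle X\rangle$, and the subquandle condition $s_x(y)=\phi(x\inv y)x\in X$ only yields $\phi(x)\inv\phi(y)\in\langle X\rangle$ for all $x,y\in X$, i.e.\ that $\phi(X)$ lies in a \emph{single coset} of $\langle X\rangle$ --- not that this coset is $\langle X\rangle$ itself. Concretely, take $G=\Z^2$ (additive), let $\phi$ swap the coordinates, and let $X=\{e_1\}$ with $e_1=(1,0)$, $e_2=(0,1)$: this is a subquandle, $\langle X\rangle=\Z e_1$, and one computes $S=\{0\}$ and hence $S'=\emptyset$; yet $(Y,t)$ is a one-element quandle whose identity map is a good involution, induced by $\zeta(y_0)=y_0^{-2}=(e_1+e_2,2)\in Z(H)\setminus S'$ where $y_0=(e_1,1)\inv$. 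So the ``only if'' direction fails with $S$ as literally defined, and no amount of massaging the subquandle axioms will produce $e_1+e_2\in\Z e_1$. (For what it is worth, the paper's own proof passes over the same point: it asserts that $n_x=2$ together with Proposition~\ref{prop:zh} gives $\zeta_x\in S$, but that only delivers the conjugation identity, not $\zeta_x\in\langle X\rangle$.) Your argument, and the proposition, are sound under the additional hypothesis $\phi(X)\subseteq\langle X\rangle$, which holds automatically when $X$ contains the identity of $G$ (as you note) or when $X=G$ --- the cases the paper actually uses downstream.
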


\begin{proof}
    By Proposition \ref{prop:zh}, $S'$ is a subset of $Z(H)$, so any function $\zeta:Y\to S'$ satisfying the conditions of Lemma \ref{lem:prelim-conj} induces a good involution $\rho'$ of $(Y,t)$.

    Conversely, let $\rho'$ be a good involution of $(Y,t)$, so that $\rho'$ is induced by a function $\zeta:Y\to Z(H)$ satisfying the conditions of Lemma \ref{lem:prelim-conj}. We have to show that $\zeta(Y)\subseteq S'$. For all elements $(x,1)\inv\in Y$, write
    \[
    (\zeta_x,n_x):=\zeta((x,1)\inv)\in Z(H),
    \]
    By Proposition \ref{prop:zh} and the inclusion $\zeta(Y)\subseteq Z(H)$, showing that $n_x=2$ will also show that $\zeta_x\in S$. Indeed, $Y$ must contain the element
    \begin{align*}
        \rho'((x,1)\inv)&=\zeta((x,1)\inv)\inv(x,1)\\
        &=(\zeta_x,n_x)\inv(x,1)\\
        &=(\phi^{-n_x}(\zeta_x\inv),-n_x)(x,1) \\
        &=(\phi^{-n_x+1}(\zeta_x\inv)x,-n_x+1),
    \end{align*}
    so $n_x=2$. Hence, $\zeta((x,1)\inv)\in S'$; since $(x,1)\inv\in Y$ was arbitrary, the proof is complete.
\end{proof}

\subsection{Main results}
We now prove the main theorem. As before, let $s$ denote the quandle structure of $(X,s)$ as a subquandle of $\Conj(G,\phi)$.

\begin{thm}[Theorem \ref{thm:main}]\label{thm:main2}
    For all functions $\rho:X\to X$, $\rho$ is a good involution of $(X,s)$ if and only if there exists a function $\psi:X\to S$ such that
    \[
    \rho(x)=\psi(x)\phi(x\inv),\qquad\psi s_x=\psi=\psi\rho
    \]
    for all $x\in X$.
\end{thm}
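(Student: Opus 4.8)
The plan is to transport the characterization of $\Good(Y,t)$ already obtained in Proposition \ref{prop:prelim} across the quandle isomorphism $\theta\colon (X,s)\bij(Y,t)$, and then to rewrite everything in terms of $X$ by unwinding the group operations in $K=G\rtimes_\phi\Z$. By Observation \ref{obs:iso-good}, a map $\rho\colon X\to X$ is a good involution of $(X,s)$ if and only if $\rho':=\theta\rho\theta\inv$ is one of $(Y,t)$; and since $\rho'\theta=\theta\rho$ and $\theta(x)=(x,1)\inv$, this amounts to $\rho'((x,1)\inv)=(\rho(x),1)\inv$ for all $x\in X$. So it suffices to translate the two clauses of Proposition \ref{prop:prelim} through the identification $Y\leftrightarrow X$, $(x,1)\inv\leftrightarrow x$.

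By Proposition \ref{prop:prelim}, $\rho'$ is good if and only if there is $\zeta\colon Y\to S'$ with $\rho'((x,1)\inv)=\zeta((x,1)\inv)\inv(x,1)$ and $\zeta t_{(x,1)\inv}=\zeta=\zeta\rho'$. Since $S'=\{(z,2)\mid z\in S\}\cap H$, writing $\zeta((x,1)\inv)=(\psi(x),2)$ gives a bijection between functions $\zeta\colon Y\to S'$ and functions $\psi\colon X\to S$. Using the multiplication and inversion formulas in $K$ recalled in Subsection \ref{subsec:A}, I would compute $\zeta((x,1)\inv)\inv(x,1)=(\psi(x),2)\inv(x,1)$ explicitly: its second coordinate comes out to $-1$, so it lies in $Y$, and applying $\theta\inv$ turns the equation $\rho'((x,1)\inv)=(\psi(x),2)\inv(x,1)$ into the displayed formula for $\rho(x)$. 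Here the fact that $(\psi(x),2)$ is central in $H$ (Proposition \ref{prop:zh}), equivalently that $\psi(x)\in S$, is precisely what is needed to put the product into the stated form. For the equivariance clauses, $t_{(x,1)\inv}=\theta s_x\theta\inv$ because $\theta$ is a quandle isomorphism, and $\rho'\theta=\theta\rho$; precomposing $\zeta t_{(x,1)\inv}=\zeta$ and $\zeta=\zeta\rho'$ with $\theta$ and post-composing with $(z,2)\mapsto z$ converts them into $\psi s_x=\psi$ and $\psi=\psi\rho$.

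For the converse, given $\psi\colon X\to S$ with $\psi s_x=\psi=\psi\rho$ and $\rho(x)$ as displayed, one reverses these steps: set $\zeta((x,1)\inv):=(x,1)(\rho(x),1)$, which lies in $H$ since both factors do (using $\rho(x)\in X$), and check that its $G$-component lies in $S$ by a short computation from the formula for $\rho(x)$ together with $\psi(x)\in S$; then $\zeta$ satisfies the hypotheses of Proposition \ref{prop:prelim}, so $\rho'$, hence $\rho$, is a good involution. I expect the main obstacle to be bookkeeping rather than ideas: tracking the chain $X\xrightarrow{\theta}Y\hookrightarrow H\leq K$, remembering that $\theta=\iota\eta$ routes through the opposite quandle, and carrying out the semidirect-product arithmetic without sign errors — in particular verifying that the relevant second coordinates are exactly $-1$ and $2$, so that $\zeta$ is forced into the degree-$2$ slice $S'$ as isolated in Proposition \ref{prop:prelim}, and that the placement of $\psi(x)$ relative to $\phi(x\inv)$ matches the statement.
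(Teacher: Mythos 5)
Your route is the paper's: transport Proposition \ref{prop:prelim} across $\theta$ and translate $\zeta\leftrightarrow\psi$ via $(x,1)\inv\leftrightarrow x$, $(\psi(x),2)\leftrightarrow\psi(x)$. The problem sits exactly in the step you defer to ``bookkeeping,'' namely that applying $\theta\inv$ to $\rho'((x,1)\inv)=(\psi(x),2)\inv(x,1)$ produces the displayed formula $\rho(x)=\psi(x)\phi(x\inv)$. If you actually carry out the semidirect-product arithmetic with the paper's conventions, you get $(\psi(x),2)\inv(x,1)=(\phi\inv(\psi(x)\inv)\,x,\,-1)$ and $\theta\inv((g,-1))=\phi(g\inv)$, hence
\[
\theta\inv\rho'\theta(x)=\phi\!\left((\phi\inv(\psi(x)\inv)x)\inv\right)=\phi(x\inv)\,\psi(x).
\]
Centrality of $(\psi(x),2)$ in $H$ amounts to the relation $\phi(\psi(x))x=\phi^{2}(x)\psi(x)$, which lets you rewrite this as $\phi\inv(\psi(x)x\inv)$ but does \emph{not} let you move $\psi(x)$ to the left of $\phi(x\inv)$. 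The two expressions $\phi(x\inv)\psi(x)$ and $\psi(x)\phi(x\inv)$ coincide exactly when $\psi(x)$ commutes with $\phi(x)$; this holds when $G$ is abelian or $\phi=\id_G$, but not in general. Concretely, take $G=D_4=\langle r,f\mid r^4=f^2=1,\ frf=r\inv\rangle$, $\phi(r)=r$, $\phi(f)=rf$, and $X=G$: then $\phi\in\Aut G$, $\phi^{2}$ is conjugation by $r$, and $r\in\Fix\phi$, so $r\in S$ by \eqref{eq:s2}; yet $r\,\phi(f\inv)=r^{2}f$ while $\phi(f\inv)\,r=f$. So the identity your argument (and the paper's) relies on fails here, and ``centrality is precisely what is needed to put the product into the stated form'' is not correct.

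Your converse construction has the mirror-image defect: $(x,1)(\rho(x),1)=(\phi(x)\rho(x),2)$, whose first coordinate is $\phi(x)\psi(x)\phi(x)\inv$ when $\rho(x)=\psi(x)\phi(x\inv)$, and you give no reason why this conjugate should lie in $S$ (membership in $S$ is not obviously stable under conjugation by $\phi(x)$). To close the gap you must either prove the statement with the formula $\rho(x)=\phi(x\inv)\psi(x)$ instead, or show that $\psi\mapsto(x\mapsto\phi(x)\psi(x)\phi(x)\inv)$ is a bijection of the set of admissible functions $X\to S$ preserving the conditions $\psi s_x=\psi=\psi\rho$; neither appears in your sketch, and the second is a genuine additional argument rather than bookkeeping.
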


\begin{proof}
    We will show that the claim is equivalent to Proposition \ref{prop:prelim}. 
    Given any function $\psi:X\to S$, define a function $\F(\psi):Y\to S'$ by \[\F(\psi)((x,1)\inv):=(\psi(x),2).\]
    Conversely, given any function $\zeta :Y\to S'$, define a function $\F\inv(\zeta ):X\to S$ by sending $g\in G$ to the first coordinate of $\zeta ((x,1)\inv)\in S'$. 
    Clearly, the assignments $\F$ and $\F\inv$ are mutually inverse.

    Now, recall that the function $\theta=\iota\eta$ is a quandle isomorphism from $(X,s)$ to $(Y,t)$, where $\eta:X\hookrightarrow (\Conj K)\op$ is the embedding in \eqref{eq:phi} and $\iota:K\to K$ is the inversion map $k\mapsto k\inv$. Therefore, the equivalence of the claim with Proposition \ref{prop:prelim} follows from Observation \ref{obs:iso-good} and the following direct calculations, which are straightforward (if somewhat tedious) to verify: For all elements $x\in X$ and pairs $(\rho,\psi)$ and $(\rho',\zeta)$ respectively satisfying the conditions of the claim and Proposition \ref{prop:prelim}, we have
    \[
    \theta\rho\theta\inv((x,1)\inv)=(\F(\psi)((x,1)\inv))\inv(x,1),\qquad \theta\inv\rho'\theta(x)=\F\inv(\zeta )(x)\phi(x\inv),
    \]
    so that
    \[
    \F(\psi s_x)=\F(\psi)t_{(x,1)\inv} ,\qquad \F(\psi\rho)=\F(\psi)\theta\rho\theta\inv
    \]
    and, similarly,
    \[
    \F\inv(\zeta t_{(x,1)\inv})=\F\inv(\zeta )s_x,\qquad \F\inv(\zeta \rho')=\F\inv(\zeta )\theta\inv\rho'\theta.
    \]
\end{proof}

\begin{rmk}\label{rmk:core}
    Let $\pi:X\surj \oo(X,s)$ denote the natural projection. Then Theorem \ref{thm:main2} can be reformulated as follows: Every good involution $\rho$ of $(X,s)$ is induced by a unique function ${\psi^*:\oo(X,s)\to S}$ via the formula $\rho(x)=\psi^*\pi(x)\phi\inv(x)$, which completely determines $\rho$. Moreover, the composition $\psi:=\psi^*\pi$ is constant on orbits of the action of $\rho$ on $X$.
\end{rmk}

\subsubsection{Bounds on the number of good involutions} As applications of Theorem \ref{thm:main2}, we give upper bounds on $|{\Good X}|$ and lower bounds in the case that $X=G$.

\begin{cor}[Cf.\ \cite{ta-conj}*{Cor.\ 6.12}]\label{cor:good-bound}
    The number of good involutions of $X$ satisfies the upper bound
    \[
    |{\Good X}|\leq \min(|{\Aut X}|, |S|^{|\oo(X)|}),
    \]
    where $\Aut X$ denotes the group of quandle automorphisms of $X$.
\end{cor}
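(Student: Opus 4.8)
The plan is to establish the two inequalities $|{\Good X}|\le|{\Aut X}|$ and $|{\Good X}|\le|S|^{|\oo(X)|}$ separately; the corollary then follows by taking the minimum.

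For the bound by $|S|^{|\oo(X)|}$, I would argue straight from Theorem \ref{thm:main2} (equivalently, Remark \ref{rmk:core}). If $\rho$ is a good involution of $(X,s)$, then $\rho(x)=\psi(x)\phi(x\inv)$ for a unique function $\psi:X\to S$ — unique because the formula rearranges to $\psi(x)=\rho(x)\phi(x)$ — and the relation $\psi s_x=\psi$ says precisely that $\psi$ is constant on the orbits of $\Inn(X,s)$, i.e.\ $\psi$ factors as $\psi=\psi^*\pi$ through the natural projection $\pi:X\surj\oo(X,s)$ for a unique $\psi^*:\oo(X,s)\to S$. Thus $\rho\mapsto\psi^*$ is an injection of $\Good X$ into the set of all functions $\oo(X,s)\to S$, which has cardinality $|S|^{|\oo(X)|}$ since $\oo(X)=\oo(X,s)$.

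For the bound by $|{\Aut X}|$, if $\Good X$ is empty there is nothing to prove, so fix some $\rho_0\in\Good X$. Given any $\rho\in\Good X$, consider the permutation $\sigma:=\rho_0\rho$ of $X$. Since $\rho$ and $\rho_0$ each commute with every $s_x$, so does $\sigma$; and applying the relation $s_{\rho_0(y)}=s_y\inv$ with $y=\rho(x)$ and then $s_{\rho(x)}=s_x\inv$ gives $s_{\sigma(x)}=s_x$. Hence $\sigma s_x=s_x\sigma=s_{\sigma(x)}\sigma$ for all $x\in X$, so $\sigma\in\Aut X$. Because $\rho_0$ is a bijection, $\rho\mapsto\rho_0\rho$ is injective, whence $|{\Good X}|\le|{\Aut X}|$. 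Conceptually, this merely reflects that $\Good X$ lies in a single right coset of $\Aut X$ inside $S_X$, since good involutions are quandle antiautomorphisms by Example \ref{ex:gis-are-antis}.

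Combining the two inequalities yields $|{\Good X}|\le\min(|{\Aut X}|,|S|^{|\oo(X)|})$, as claimed. I do not anticipate a genuine obstacle here; the only point requiring a little care is the verification that the composite $\rho_0\rho$ of two good involutions is an honest quandle automorphism, which is routine from the two defining relations of a good involution (and is the standard fact that a composite of two antiautomorphisms of a quandle is an automorphism).
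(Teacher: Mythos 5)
Your proposal is correct and follows the paper's argument: the bound $|{\Good X}|\leq |S|^{|\oo(X)|}$ is obtained exactly as in the paper via Remark \ref{rmk:core}, and the bound $|{\Good X}|\leq |{\Aut X}|$ is the content of \cite{ta-conj}*{Cor.\ 4.8}, which the paper simply cites. Your inline verification that $\rho_0\rho\in\Aut X$ (using $s_{\rho_0\rho(x)}=s_{\rho(x)}\inv=s_x$ and commutation with each $s_x$) is a correct, self-contained proof of that cited fact.
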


\begin{proof}
    The bound $|{\Good X}|\leq |{\Aut X}|$ comes from \cite{ta-conj}*{Cor.\ 4.8}. 
    On the other hand, Remark \ref{rmk:core} shows that there are at least as many functions $\psi^*:\oo(X)\to S$ as there are good involutions $\rho$; that is, $|S|^{|\oo(X)|}\geq |{\Good X}|$.
\end{proof}

The upper bound in Corollary \ref{cor:good-bound} can be improved considerably if $X$ is connected.

\begin{cor}[Cf.\ \cite{ta-conj}*{Cor.\ 6.11}]\label{cor:main-conn}
	If $X$ is connected, then for all functions $\rho:X\to G$, $\rho$ is a good involution of $X$ if and only if there exists an element $t\in S$ such that
	\[
	\rho(x)=t\phi(x\inv)\in X
	\]
	for all $x\in X$. In particular, $|{\Good X}|\leq |S|$.
\end{cor}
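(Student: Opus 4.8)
The plan is to obtain Corollary~\ref{cor:main-conn} as a direct consequence of Theorem~\ref{thm:main2}, using connectedness of $X$ to pin down the auxiliary function $\psi$. This is exactly the specialization indicated in Remark~\ref{rmk:core}: when $X$ is connected, $\oo(X,s)$ is a single point, so the function $\psi^*\colon\oo(X,s)\to S$ of that remark degenerates to a single element $t\in S$.

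First I would handle the forward implication. Assuming $\rho$ is a good involution of $X$, Theorem~\ref{thm:main2} supplies a function $\psi\colon X\to S$ with $\rho(x)=\psi(x)\phi(x\inv)$ and $\psi s_x=\psi=\psi\rho$ for all $x\in X$. The key observation is that $\psi s_x=\psi$ for every $x$ means $\psi$ is fixed by each generator of $\Inn X=\langle s_x\mid x\in X\rangle$, hence by all of $\Inn X$; equivalently, $\psi$ is constant on each connected component of $X$. Connectedness then forces $\psi$ to be globally constant, say equal to some $t\in S$, whence $\rho(x)=t\phi(x\inv)$; and since $\rho$ takes values in $X$, we automatically get $t\phi(x\inv)\in X$.

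For the converse I would run Theorem~\ref{thm:main2} backward: given $t\in S$ with $t\phi(x\inv)\in X$ for all $x$, set $\rho(x):=t\phi(x\inv)$, a well-defined map $X\to X$, and take $\psi\colon X\to S$ to be the constant function with value $t$. The identities $\psi s_x=\psi$ and $\psi\rho=\psi$ are then trivial, so Theorem~\ref{thm:main2} certifies that $\rho$ is a good involution (in particular it handles the involutivity of $\rho$ for free). The bound $|{\Good X}|\leq|S|$ drops out, since each good involution determines, and is determined by, its constant $t\in S$.

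I do not anticipate a serious obstacle: the argument is essentially bookkeeping on top of Theorem~\ref{thm:main2}. The two points that require a moment's care are the passage from ``$\psi$ is $s_x$-invariant for all $x$'' to ``$\psi$ is $\Inn X$-invariant'' (immediate, since the $s_x$ generate $\Inn X$), and the fact that the corollary allows $\rho$ to be an arbitrary function $X\to G$ rather than $X\to X$ — this is absorbed into the side condition $t\phi(x\inv)\in X$, which is precisely what is needed for Theorem~\ref{thm:main2} to apply.
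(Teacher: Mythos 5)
Your proposal is correct and follows essentially the same route as the paper: the paper's proof likewise invokes Remark \ref{rmk:core} to conclude that $\psi$ must be constant on the connected quandle $X$ and then reads off the statement from Theorem \ref{thm:main2}. You simply spell out the intermediate steps (invariance under $\Inn X$, the trivial verification for constant $\psi$ in the converse) that the paper leaves implicit.
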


\begin{proof}
	In light of Remark \ref{rmk:core}, $\psi:X\to S$ is a constant function. Therefore, taking $t:=\phi(X)$ in Theorem \ref{thm:main2} yields the claim.
\end{proof}

We now turn our attention to the case in which $X=G$.

\begin{cor}\label{cor:bnd1}
    Let $X:=G$ in \eqref{eq:s2}. Then
    \[
    |{\Good (\Conj(G,\phi))}|\geq |S|.
    \]
\end{cor}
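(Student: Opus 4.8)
The plan is to exhibit, for each $t\in S$, a good involution $\rho_t$ of $\Conj(G,\phi)$ in such a way that $t\mapsto\rho_t$ is injective; this yields $|{\Good(\Conj(G,\phi))}|\geq|S|$ at once. The maps $\rho_t$ will come from the simplest possible input to Theorem \ref{thm:main2}, namely constant functions.

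Fix $t\in S$ and let $\psi:G\to S$ be the constant function with value $t$. Since $\psi$ is constant, the conditions $\psi s_x=\psi=\psi\rho$ of Theorem \ref{thm:main2} are satisfied automatically by whichever map $\rho$ that theorem attaches to $\psi$; and since $X=G$, the theorem's formula defines a genuine self-map $\rho_t$ of $X=G$, with no image-containment to verify. Hence Theorem \ref{thm:main2} certifies that $\rho_t\in\Good(\Conj(G,\phi))$. (Equivalently, via Remark \ref{rmk:core}, $\rho_t$ is the good involution determined by the constant function $\oo(G)\to S$ with value $t$; the description \eqref{eq:s2} of $S$ is available because $1\in X=G$.) Injectivity of $t\mapsto\rho_t$ is then immediate: evaluating $\rho_t$ at the identity of $G$ returns $t$, since $\phi$ fixes the identity, so distinct values of $t$ produce good involutions that already disagree at $1$; alternatively, the uniqueness clause of Remark \ref{rmk:core} recovers $t$ from $\rho_t$ as the value of its associated constant function $\oo(G)\to S$.

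There is essentially no obstacle once Theorem \ref{thm:main2} is in hand; the one place the hypothesis $X=G$ is used is to guarantee that constant functions into $S$ are admissible, i.e.\ that the formula of Theorem \ref{thm:main2} carries $X$ into $X$. For a proper subquandle this can fail---the relation defining $S$ is only quantified over $y\in X$, and the identity need not lie in $X$---which is exactly why this lower bound, unlike the upper bound of Corollary \ref{cor:good-bound}, is phrased only for $X=G$.
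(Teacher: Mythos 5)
Your proof is correct and follows essentially the same route as the paper: both arguments feed the constant functions $\psi\equiv t$ into Theorem \ref{thm:main2}, using that $X=G$ is a group so that $g\mapsto t\phi(g\inv)$ is a genuine self-map of $G$ and the conditions $\psi s_x=\psi=\psi\rho$ hold trivially. Your explicit injectivity check (evaluating $\rho_t$ at the identity, or invoking the uniqueness in Remark \ref{rmk:core}) is a small but welcome addition that the paper leaves implicit.
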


\begin{proof}
    Since $G$ is a group, $t\phi(g\inv)\in G$ for all $t\in S$ and $g\in G$. Thus, by Theorem \ref{thm:main2}, the constant functions $\psi:G\to S$ all induce good involutions $\rho$ of $\Conj(G,\phi)$.
\end{proof}

\begin{rmk}
    If $X$ is a proper subquandle of $\Conj(G,\phi)$, then $X$ may have fewer than $|S|$ good involutions. See \cite{ta-conj}*{Subsec.\ 6.2} for examples in which $S$ is nonempty but $\Good X$ is empty.
\end{rmk}

\begin{cor}[Cf.\ \cite{taniguchi}*{Prop.\ 2.2}]\label{cor:conn-const}
    If $\Conj(G,\phi)$ is connected, then for all functions $\rho:G\to G$, $\rho$ is a good involution of $\Conj(G,\phi)$ if and only if there exists an element $t\in S$ such that \[\rho(g)=t\phi(g\inv)\] for all $g\in G$. In particular, $|{\Good (\Conj(G,\phi))}|=|S|$.
\end{cor}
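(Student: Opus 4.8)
The statement combines two claims: first, a characterization of the good involutions of a connected twisted conjugation quandle $\Conj(G,\phi)$ as exactly the maps $g\mapsto t\phi(g\inv)$ for $t\in S$; second, the equality $|{\Good(\Conj(G,\phi))}|=|S|$. The plan is to obtain both by combining Corollary~\ref{cor:main-conn} (applied to $X:=G$) with Corollary~\ref{cor:bnd1}.

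First I would invoke Corollary~\ref{cor:main-conn} with $X:=G$. Since $\Conj(G,\phi)$ is connected by hypothesis, that corollary applies verbatim and tells us that a function $\rho:G\to G$ is a good involution if and only if there exists $t\in S$ with $\rho(g)=t\phi(g\inv)\in G$ for all $g\in G$; note that the membership condition ``$\in G$'' is automatic here because $G$ is a group, so it imposes no constraint. This already gives the ``if and only if'' part of the statement and the upper bound $|{\Good(\Conj(G,\phi))}|\leq|S|$.

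Next I would invoke Corollary~\ref{cor:bnd1}, which gives the matching lower bound $|{\Good(\Conj(G,\phi))}|\geq|S|$: every constant function $\psi:G\to S$, say $\psi\equiv t$, satisfies the conditions of Theorem~\ref{thm:main2} trivially (the conditions $\psi s_g=\psi=\psi\rho$ hold because $\psi$ is constant), hence induces a good involution, and distinct values of $t$ yield distinct maps $g\mapsto t\phi(g\inv)$ since $\phi$ is a bijection of $G$. Combining the two bounds yields $|{\Good(\Conj(G,\phi))}|=|S|$, completing the proof.

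I do not expect any genuine obstacle here: the result is essentially a corollary of a corollary, and the only thing to be careful about is that the two inequalities $|{\Good X}|\le|S|$ and $|{\Good X}|\ge|S|$ really do come from the two previously established corollaries with the specialization $X=G$, and that the explicit form $\rho(g)=t\phi(g\inv)$ agrees in both (it does, since in Corollary~\ref{cor:main-conn} the constant $t$ is $\psi(X)$ and in Corollary~\ref{cor:bnd1} it is the constant value of $\psi$, and in both cases Theorem~\ref{thm:main2} gives $\rho(g)=\psi(g)\phi(g\inv)=t\phi(g\inv)$). So the proof is just: ``Combine Corollaries~\ref{cor:main-conn} and~\ref{cor:bnd1} with $X:=G$.''
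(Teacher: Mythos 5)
Your proposal is correct and matches the paper's proof, which likewise derives the statement directly by combining Corollary~\ref{cor:main-conn} (specialized to $X=G$) for the characterization and upper bound with Corollary~\ref{cor:bnd1} for the lower bound. The additional details you supply (injectivity of $t\mapsto(g\mapsto t\phi(g\inv))$ and the automatic membership $t\phi(g\inv)\in G$) are correct and consistent with what the paper leaves implicit.
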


\begin{proof}
    This follows directly from Corollaries \ref{cor:main-conn} and \ref{cor:bnd1}.
\end{proof}

\begin{rmk}
    For an infinite class of examples in which Corollary \ref{cor:conn-const} applies, see Corollary \ref{cor:good-taka}. Indeed, since Alexander quandles $\Alex(A,\phi)$ are both twisted conjugation quandles $\Conj(A,\phi)$ and generalized Alexander quandles $\mathrm{GAlex}(A,\phi)$, Corollary \ref{cor:conn-const} and \cite{taniguchi}*{Prop.\ 2.2} generalize Corollary \ref{cor:good-taka} in two different ways.
\end{rmk}

\section{Symmetric Alexander quandles}\label{sec:alex}
In this section, we use Theorem \ref{thm:main2} to extend several classification results of Taniguchi \cite{taniguchi} in the abelian case. 
Henceforth, let $A$ be an additive abelian group, and let $\phi\in\Aut A$ be an automorphism of $A$. Let $Q:=\Alex(A,\phi)$ denote the corresponding Alexander quandle, and let $s$ denote the quandle structure of $Q$. 

\subsection{Good involutions and fixed points} Since $Q=\Conj(A,\phi)$, taking $X:=A$ shows that the set $S$ given in \eqref{eq:s2} equals
\begin{equation}\label{eq:s22}
S=\begin{cases}
	\Fix\phi&\text{if }\phi\text{ is an involution,}\\
	\emptyset &\text{otherwise.}
\end{cases}
\end{equation}

Below, we use this observation to strengthen the statement of \cite{taniguchi}*{Thm.\ 1.3} for abelian groups.

\begin{lemma}\label{lem:q-kei}
	$Q$ is a kei if and only if $\varphi$ is an involution.
\end{lemma}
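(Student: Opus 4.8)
The plan is to reduce the claim to a direct computation with the explicit quandle structure of $Q$. First I would rewrite the twisted conjugation formula $s_a(b)=\phi(a\inv b)a$ in additive notation: for all $a,b\in A$,
\[
s_a(b)=\phi(b-a)+a=\phi(b)+\alpha(a),
\]
where $\alpha(a):=a-\phi(a)$ is the map from Example \ref{lem:conn-alex}. In particular, taking $a=0$ gives the key observation that $s_0=\phi$.

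For the forward implication I would argue as follows: if $Q$ is a kei, then $\phi^2=s_0^2=\id_A$, so $\phi$ is an involution. For the reverse implication, assuming $\phi^2=\id_A$, I would compute
\[
s_a^2(b)=\phi\big(\phi(b)+\alpha(a)\big)+\alpha(a)=\phi^2(b)+\phi(\alpha(a))+\alpha(a),
\]
and then observe that $\phi(\alpha(a))+\alpha(a)=\big(\phi(a)-\phi^2(a)\big)+\big(a-\phi(a)\big)=a-\phi^2(a)=0$, so that $s_a^2(b)=b$. Since $a\in A$ was arbitrary, $Q$ is a kei.

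I do not expect any real obstacle: the argument is a one-line computation in each direction. The only points needing care are the sign bookkeeping in passing from the multiplicative convention $s_g(h)=\phi(g\inv h)g$ to additive notation, and the convention --- consistent with \eqref{eq:s22} --- that \emph{involution} is understood in the weak sense $\phi^2=\id_A$, which allows $\phi=\id_A$ (in which case $Q$ is the trivial quandle on $A$ and is a kei for the trivial reason). Alternatively, one could deduce the lemma from \eqref{eq:s22} together with Corollary \ref{cor:bnd1} and the fact that $\Good Q$ is nonempty for a kei, but the direct computation above is cleaner and self-contained.
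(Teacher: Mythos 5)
Your computation is correct and is exactly the ``straightforward computation'' the paper leaves to the reader: the identity $s_a(b)=\phi(b)+\alpha(a)$, the observation $s_0=\phi$ for the forward direction, and the cancellation $\phi(\alpha(a))+\alpha(a)=a-\phi^2(a)=0$ for the converse. Your convention that involution means $\phi^2=\id_A$ (allowing $\phi=\id_A$) matches the paper's usage in \eqref{eq:s22}; the parenthetical alternative via $\Good Q$ is best ignored, as you rightly do, since those results logically depend on this lemma.
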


\begin{proof}
	Straightforward computation.
\end{proof}

\begin{thm}[Cf.\ \cite{taniguchi}*{Thm.\ 1.3}]\label{cor:taniguchi}
	$\Good Q$ is nonempty if and only if $Q$ is a kei. 
    
    In this case, a function $\rho:A\to A$ is a good involution of $Q$ if and only if there exists a function $\psi:A\to \Fix\phi$ such that
    \[
    \rho(a)=\psi(a)-\phi(a),\qquad \psi s_a= \psi=\psi\rho
    \]
    for all $a\in A$. In particular, $|{\Good Q}|\geq |{\Fix\phi}|$.
\end{thm}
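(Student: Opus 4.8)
The plan is to obtain the theorem as a direct specialization of Theorem \ref{thm:main2}, exploiting that $Q=\Alex(A,\phi)=\Conj(A,\phi)$, so that we may take $X:=A$ and hence $\langle X\rangle=A$. First I would rewrite the conclusion of Theorem \ref{thm:main2} additively: with the group law written additively, $x\inv$ becomes $-a$, so $\phi(x\inv)=-\phi(a)$ and $\psi(x)\phi(x\inv)$ becomes $\psi(a)-\phi(a)$, while the conditions $\psi s_x=\psi=\psi\rho$ are unchanged. It then remains to identify $S$. Since $A$ is abelian, conjugation is trivial, so the defining condition $\phi^{2}(y)=\phi(x)yx\inv$ in \eqref{eq:s2} collapses to $\phi^{2}(y)=y$ for every $y\in A$, i.e.\ to $\phi^{2}=\id_A$, independently of $x$; thus $S=\Fix\phi$ when $\phi$ is an involution and $S=\emptyset$ otherwise---this is precisely \eqref{eq:s22}. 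By Lemma \ref{lem:q-kei}, $\phi$ is an involution exactly when $Q$ is a kei.

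With these identifications in hand, the biconditional characterization of $\rho$ in the statement is literally the conclusion of Theorem \ref{thm:main2} rewritten additively (with $S=\Fix\phi$ in the kei case). For the first assertion: if $Q$ is not a kei, then $S=\emptyset$ by the previous paragraph, so no function $\psi:A\to S$ exists and Theorem \ref{thm:main2} forces $\Good Q=\emptyset$. Conversely, if $Q$ is a kei, then $0\in\Fix\phi=S$, so Corollary \ref{cor:bnd1} (applied with $G:=A$) shows that the constant function $\psi\equiv 0$ induces a good involution of $Q$---explicitly the map $a\mapsto-\phi(a)$---whence $\Good Q\neq\emptyset$. The same corollary gives $|{\Good Q}|\ge|S|=|{\Fix\phi}|$ in the kei case, which is the final inequality.

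There is no serious obstacle here: the theorem is essentially a corollary of Theorem \ref{thm:main2} once \eqref{eq:s22}, Lemma \ref{lem:q-kei}, and Corollary \ref{cor:bnd1} are available. The only points requiring care are the bookkeeping in passing from multiplicative to additive notation, and the observation that in the abelian setting the membership condition for $S$ no longer depends on the element $x$ but only on whether $\phi^{2}=\id_A$; getting \eqref{eq:s22} right is exactly what makes the clean dichotomy ``$\Good Q\neq\emptyset$ iff $Q$ is a kei'' fall out. One should also note that the lower bound $|{\Good Q}|\ge|{\Fix\phi}|$ is asserted only in the kei case, which is consistent with $\Good Q$ being empty otherwise regardless of the size of $\Fix\phi$.
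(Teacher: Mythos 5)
Your proposal is correct and follows exactly the paper's own route: specialize Theorem \ref{thm:main2} to $X:=A$, identify $S$ via \eqref{eq:s22}, translate the involution condition with Lemma \ref{lem:q-kei}, and invoke Corollary \ref{cor:bnd1} for nonemptiness and the lower bound. The paper simply cites these ingredients without spelling out the additive bookkeeping, which you have done carefully and correctly.
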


\begin{proof}
    The claim follows immediately from combining Theorem \ref{thm:main2} with \eqref{eq:s22}, Lemma \ref{lem:q-kei}, and Corollary \ref{cor:bnd1}.
\end{proof}

\begin{example}[Cf.\ \cite{symm-quandles-2}*{Prop.\ 3.1}]
    If $Q$ is a kei, then the function $\psi(a):=a+\phi(a)$ satisfies the conditions of Theorem \ref{cor:taniguchi}, and the induced good involution $\rho$ is the identity map $\id_A$.
\end{example}

\begin{rmk}
    There exist infinitely many Alexander quandles $Q$ such that $|{\Good Q}|= |{\Fix\phi}|$ and infinitely many $Q$ such that $|{\Good Q}|> |{\Fix\phi}|$; see Corollary \ref{cor:good-taka} and Proposition \ref{prop:dihedral}.
\end{rmk}

\subsection{Connected Alexander quandles}
In this subsection, we discuss good involutions of connected Alexander quandles $Q$. 
First, define the function $\alpha:A\to A$ by
\[
\alpha(a):=a-\phi(a)
\]
for all $a\in A$. Recall from Example \ref{lem:conn-alex} that $Q$ is connected if and only if $\alpha$ is surjective. Also, recall that $A$ is called \emph{$2$-divisible} if the function $a\mapsto 2a$ is a surjection from $A$ to $A$.

\begin{lemma}\label{lem:conn-alex-2}
    If $\phi$ is an involution and $\alpha$ is surjective, then $\phi$ is the inversion map $a\mapsto -a$.
\end{lemma}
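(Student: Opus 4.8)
The plan is to show that $\phi$ acts as negation on the image of $\alpha$, and then invoke surjectivity of $\alpha$ to conclude that $\phi$ is the inversion map on all of $A$. The key observation is that $\alpha$ is, up to sign, $\phi$-equivariant in a very strong sense: applying $\phi$ to $\alpha(a)=a-\phi(a)$ and using $\phi^2=\id_A$ (which holds since $\phi$ is an involution) gives
\[
\phi(\alpha(a))=\phi(a)-\phi^2(a)=\phi(a)-a=-\bigl(a-\phi(a)\bigr)=-\alpha(a).
\]
So $\phi(b)=-b$ for every $b$ in the image of $\alpha$.

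First I would record the computation above as the single substantive step. Then, since $\alpha$ is assumed surjective (equivalently, by Example \ref{lem:conn-alex}, since $Q$ is connected), every element $b\in A$ is of the form $b=\alpha(a)$ for some $a\in A$, so $\phi(b)=-b$ for all $b\in A$; that is, $\phi$ is the inversion map $a\mapsto -a$, as claimed. This also makes clear, in the notation of the surrounding section, that the only connected involutory Alexander quandle structure on a given group $A$ is the Takasaki kei $T(A)$ — which is presumably the point of the lemma en route to Theorem \ref{thm:conn-q}.

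There is essentially no obstacle here: the statement reduces to a two-line identity plus the definition of surjectivity. The only thing to be a little careful about is bookkeeping of which hypothesis is used where — the involution hypothesis is needed precisely to replace $\phi^2(a)$ by $a$, and surjectivity of $\alpha$ is needed precisely to pass from "$\phi=-\id$ on $\operatorname{im}\alpha$" to "$\phi=-\id$ on $A$" — but neither requires any real work. I would therefore keep the proof to two or three sentences mirroring the display above.
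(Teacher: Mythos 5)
Your proof is correct and is essentially identical to the paper's: both compute $\phi(\alpha(b))=\phi(b)-\phi^2(b)=\phi(b)-b=-\alpha(b)$ using $\phi^2=\id_A$, and then use surjectivity of $\alpha$ to conclude $\phi=-\id_A$ on all of $A$.
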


\begin{proof}
    Let $a\in A$. Since $\alpha$ is surjective, there exists an element $b\in A$ such that $\alpha(b)=a$, so that
    \[
    \phi(a)=\phi\alpha(b)=\phi(b)-\phi^2(b)=\phi(b)-b=-\alpha(b)=-a.
    \]
    Since $a\in A$ was arbitrary, the claim follows.
\end{proof}

\begin{rmk}
It is well-known that if $A$ is finite, then $Q$ is connected if and only if $\phi$ is fixed-point-free (that is, $\Fix \phi=\{0\}$) \cite{sim}.
    However, neither implication holds in general if $A$ is infinite.

    To see one direction, let $A$ be the direct sum $\bigoplus_{n\geq 1}\Z$, and define $\phi:A\to A$ by
    \[
    (k_1,k_2,k_3,\dots) \mapsto (k_1+k_2,k_2+k_3,k_3+k_4,\dots).
    \]
    Then $\phi$ is an automorphism of $A$, and $\alpha:A\to A$ is surjective, but $\phi$ is not fixed-point-free:
    \[
    \Fix\phi=\{(k,0,0,\dots)\mid k\in\Z\}.
    \]

    Conversely, let $A$ be any $2$-torsionless abelian group that is not $2$-divisible, and let $\varphi$ be the inversion map $a\mapsto -a$. Then $\varphi$ is a fixed-point-free automorphism of $A$, but $\alpha:A\to A$ is the nonsurjective mapping $a\mapsto 2a$. Indeed, the converse of Lemma \ref{lem:conn-alex-2} holds if and only if $A$ is $2$-divisible.
\end{rmk}

\begin{lemma}\label{lem:taka-conn}
    The Takasaki kei $T(A)$ is connected if and only if $A$ is $2$-divisible.
\end{lemma}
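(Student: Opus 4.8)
The Takasaki kei $T(A)$ is the Alexander quandle $\Alex(A,\phi)$ with $\phi$ the inversion map $a\mapsto -a$, so the associated function $\alpha:A\to A$ is $\alpha(a)=a-\phi(a)=a+a=2a$. By Example \ref{lem:conn-alex}, $T(A)=\Alex(A,\phi)$ is connected if and only if $\alpha$ is surjective, i.e.\ if and only if the map $a\mapsto 2a$ is surjective, which is precisely the definition of $A$ being $2$-divisible. Thus the proof is essentially an unwinding of definitions, and the main step is just to verify the identification $\alpha(a)=2a$ for the inversion automorphism and then quote Example \ref{lem:conn-alex}.

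I do not anticipate any real obstacle here; the only thing to be mildly careful about is making sure the reader sees that Example \ref{lem:conn-alex} applies verbatim with $\phi$ the inversion map, and that ``$2$-divisible'' as defined just above the lemma (the function $a\mapsto 2a$ is a surjection $A\to A$) matches the surjectivity of $\alpha$ exactly. No appeal to Lemma \ref{lem:conn-alex-2} or to $2$-torsionlessness is needed for this direction.

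\begin{proof}
    Recall that $T(A)=\Alex(A,\phi)$, where $\phi$ is the inversion map $a\mapsto -a$. For all $a\in A$ we then have
    \[
    \alpha(a)=a-\phi(a)=a-(-a)=2a.
    \]
    Hence $\alpha:A\to A$ is the map $a\mapsto 2a$, which is surjective if and only if $A$ is $2$-divisible. By Example \ref{lem:conn-alex}, $T(A)$ is connected if and only if $\alpha$ is surjective, and the claim follows.
\end{proof}
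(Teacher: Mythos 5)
Your proof is correct and is exactly the paper's argument: the paper simply states that the lemma is a special case of Example \ref{lem:conn-alex}, and you have spelled out the underlying computation $\alpha(a)=a-\phi(a)=2a$ that makes that reduction work. No issues.
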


\begin{proof}
    This is a special case of Example \ref{lem:conn-alex}.
\end{proof}

\subsubsection{Connected Alexander kei are Takasaki kei}
The following completely characterizes involutory, connected Alexander quandles.

\begin{thm}\label{thm:conn-q}
    For all Alexander quandles $Q=\Alex(A,\phi)$, the following are equivalent:
    \begin{enumerate}
        \item $Q$ is a connected kei.
        \item $A$ is $2$-divisible, and $Q$ equals the Takasaki kei $T(A)$.
    \end{enumerate}
\end{thm}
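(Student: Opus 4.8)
The plan is to prove the two implications separately, in each case assembling the conclusion from the lemmas already established in this subsection, so that no new computation is required.

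For $(2)\Rightarrow(1)$, suppose $A$ is $2$-divisible and $Q=T(A)$. Then $Q=\Alex(A,\phi)$ with $\phi$ the inversion map $a\mapsto -a$, which is visibly an involution, so $Q$ is a kei (one could also invoke Lemma \ref{lem:q-kei}). Since $A$ is $2$-divisible, Lemma \ref{lem:taka-conn} gives that $T(A)$ is connected. Hence $Q$ is a connected kei.

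For $(1)\Rightarrow(2)$, suppose $Q$ is a connected kei. Being a kei, Lemma \ref{lem:q-kei} shows $\phi$ is an involution. Being connected, Example \ref{lem:conn-alex} shows the map $\alpha(a)=a-\phi(a)$ is surjective. Now Lemma \ref{lem:conn-alex-2} applies and forces $\phi$ to be the inversion map $a\mapsto -a$; therefore $Q=\Alex(A,\phi)=T(A)$ by definition of the Takasaki kei. Finally, since $Q=T(A)$ is connected by hypothesis, Lemma \ref{lem:taka-conn} yields that $A$ is $2$-divisible, which completes $(2)$.

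There is no real obstacle here: the substantive content lives in Lemma \ref{lem:conn-alex-2} (the rigidity statement that ``involutory plus connected'' pins $\phi$ down to inversion) and in the connectedness criterion of Lemma \ref{lem:taka-conn}, both of which are already proved. The only care needed is bookkeeping---checking that ``kei'' is correctly translated to ``$\phi$ an involution'' and ``connected'' to ``$\alpha$ surjective'' in both directions---so the write-up should simply be a short citation chain rather than a calculation.
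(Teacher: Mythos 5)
Your proposal is correct and follows essentially the same route as the paper: both implications are assembled from Lemma \ref{lem:q-kei}, Example \ref{lem:conn-alex}, Lemma \ref{lem:conn-alex-2}, and Lemma \ref{lem:taka-conn}, exactly as in the published proof, which merely compresses the same citation chain into one line. No changes are needed.
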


\begin{proof}
Note that if $\phi$ is the inversion map $a\mapsto -a$, then $\alpha$ is the mapping $a\mapsto 2a$.
    Hence, the claim follows directly from Example \ref{lem:conn-alex} and Lemmas \ref{lem:q-kei}, \ref{lem:conn-alex-2}, and \ref{lem:taka-conn}.
\end{proof}

\begin{rmk}
    Theorem \ref{thm:conn-q} does not hold for generalized Alexander quandles $\mathrm{GAlex}(G,\phi)$. 
    For example, let $G:=A_5$ be the alternating group on five letters, let $a:=(12)(34)$, and consider the inner automorphism $\phi:A_5\to A_5$ defined by $g\mapsto aga$. Then the generalized Alexander quandle $\operatorname{GAlex}(A_5,\phi)$ is a kei because $\phi$ is an involution, and a quick \texttt{GAP} \cite{GAP4} verification shows that $\operatorname{GAlex}(A_5,\phi)$ is connected. Of course, $\operatorname{GAlex}(A_5,\phi)$ is not a Takasaki kei.
\end{rmk}

Using Theorem \ref{thm:conn-q}, we strengthen the statement of \cite{taniguchi}*{Prop.\ 2.2} for abelian groups.

\begin{cor}[Cf.\ \cite{taniguchi}*{Prop.\ 2.2}]\label{cor:good-taka}
	Suppose that $Q=\Alex(A,\phi)$ is involutory and connected, and let $T$ be the subgroup of $A$ whose elements are involutions. 
    
	Then for all functions $\rho:A\to A$, $\rho$ is a good involution of $Q$ if and only if there exists an element $t\in T$ such that $\rho(a)=t+a$ for all $a\in A$. In particular, $|{\Good Q}|=|T|$.
\end{cor}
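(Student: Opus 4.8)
The plan is to reduce the statement to Corollary~\ref{cor:conn-const} by first pinning down the structure of $Q$ and the set $S$ of \eqref{eq:s22}. Since $Q=\Alex(A,\phi)=\Conj(A,\phi)$ is assumed involutory and connected, Theorem~\ref{thm:conn-q} tells us that $A$ is $2$-divisible and that $\phi$ is the inversion map $a\mapsto -a$; in particular $\phi$ is an involution, so \eqref{eq:s22} gives $S=\Fix\phi$. Next I would identify $\Fix\phi$ with the subgroup $T$ of involutions: since $\phi(a)=-a$, we have $\phi(a)=a$ if and only if $2a=0$, so $\Fix\phi=\{a\in A\mid 2a=0\}=T$.

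With these identifications in hand, apply Corollary~\ref{cor:conn-const} with $G:=A$: since $\Conj(A,\phi)$ is connected, a function $\rho:A\to A$ is a good involution of $Q$ if and only if there is an element $t\in S=T$ with $\rho(a)=t\,\phi(a\inv)$ for all $a\in A$, and moreover $|{\Good Q}|=|S|=|T|$. It then remains only to rewrite $t\,\phi(a\inv)$ in additive notation: here $a\inv=-a$ and $\phi(-a)=-\phi(a)=-(-a)=a$, so $t\,\phi(a\inv)=t+a$. This yields exactly the asserted form $\rho(a)=t+a$, completing the proof.

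There is essentially no hard step: the corollary is a direct specialization of Corollary~\ref{cor:conn-const} once Theorem~\ref{thm:conn-q} is invoked to force $\phi$ to be inversion. The only point requiring a little care is the bookkeeping between the multiplicative conventions of Section~\ref{sec:core} (where $\phi(x\inv)$ appears) and the additive conventions used for Alexander quandles, together with the elementary observation that the fixed-point subgroup of the inversion automorphism is precisely $T$. One could alternatively derive the result directly from Theorem~\ref{cor:taniguchi}: since $\phi$ is inversion, $\Fix\phi=T$, and connectedness forces the function $\psi:A\to T$ of Theorem~\ref{cor:taniguchi} to be constant (as in Remark~\ref{rmk:core}), so $\rho(a)=\psi(a)-\phi(a)=t+a$ for a fixed $t\in T$; but the route through Corollary~\ref{cor:conn-const} is shorter.
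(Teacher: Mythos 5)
Your proof is correct and follows exactly the paper's route: invoke Theorem \ref{thm:conn-q} to force $\phi$ to be the inversion map, identify $T=\Fix\phi$ with the set $S$ of \eqref{eq:s22}, and specialize Corollary \ref{cor:conn-const}. The only difference is that you spell out the additive translation $t\,\phi(a\inv)=t+a$, which the paper leaves implicit.
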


\begin{proof}
    By Theorem \ref{thm:conn-q}, we have $\phi(a)=-a$ for all $a\in A$, so $T=\Fix\phi$ equals the set $S$ in \eqref{eq:s22}. Hence, the claim is a special case of Corollary \ref{cor:conn-const}.
\end{proof}

\begin{rmk}
    Under the hypotheses of Corollary \ref{cor:good-taka}, Theorem \ref{thm:conn-q} states that $A$ must be $2$-divisible. 
    If $A$ is \emph{uniquely} $2$-divisible (that is, if $\alpha$ is a bijection), then $T=\{0\}$ in Corollary \ref{cor:good-taka}, so $\Good Q=\{\id_A\}$. 
    
    However, $A$ may be $2$-divisible but not uniquely $2$-divisible. For example, if $A$ equals $\mathbb{Q}/\Z$ or the Pr{\"u}fer $2$-group, then $A$ satisfies the conditions of Theorem \ref{thm:conn-q}. In these examples, $T\cong\Z/2\Z$ in Corollary \ref{cor:good-taka}, so $|{\Good Q}|=2$.
\end{rmk}

\section{Symmetric linear quandles}\label{sec:linear}
In this section, we apply the results of Section \ref{sec:alex} to study good involutions of \emph{linear quandles}, which are Alexander quandles of finite cyclic groups. 

\subsection{Preliminaries}
We give a brief overview of linear quandles. See \cite{alex} for classification results for linear quandles and \cite{fox} for applications of linear quandles to knot theory.

\begin{definition}[\cite{alex}]
    Let $n\in\Z^+$ be a positive integer, and let $k\in(\Z/n\Z)^\times$. Define the \emph{linear quandle} $\Lambda_{n,k}$ to be the Alexander quandle $\Alex(\Z/n\Z,\phi_k)$, where $\phi_k(m):=km$ for all $m\in\Z/n\Z$.
\end{definition}

\begin{rmk}[\citelist{\cite{alex}\cite{hou}}]
    In light of Remark \ref{rmk:module}, $\Lambda_{n,k}$ can be regarded as the $\Z[t^{\pm 1}]$-module
    \[
    \Z[t^{\pm 1}]/(n,t-k).
    \]
\end{rmk}

\begin{example}
    For all $n\in\Z^+$, the linear quandle $\Lambda_{n,-1}$ equals the dihedral quandle $R_n$.
\end{example}

\begin{example}\label{ex:trivial-lin}
    A linear quandle $\Lambda_{n,k}$ is a trivial quandle if and only if $k\equiv 1\pmod{n}$.
\end{example}

\begin{obs}\label{obs:gcd}
    Given a linear quandle $\Lambda_{n,k}$, let $d:=\gcd(n,k-1)$. Then the fixed points of the action of $\phi_k$ on $\Z/n\Z$ are
    \[
    \Fix\phi_k=\langle n/d\rangle\cong\Z/d\Z.
    \]
\end{obs}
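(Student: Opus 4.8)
The plan is to translate the fixed-point condition into an elementary congruence and then solve it in the usual way for cyclic groups. First I would note that, since $\phi_k$ acts on $\Z/n\Z$ by $m\mapsto km$, an element $m$ satisfies $\phi_k(m)=m$ precisely when $(k-1)m\equiv 0\pmod n$; thus $\Fix\phi_k$ is exactly the annihilator of $k-1$ in the cyclic group $\Z/n\Z$.

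Next I would compute this annihilator. Writing $d:=\gcd(n,k-1)$ and factoring $n=dn'$ and $k-1=dk'$ with $\gcd(n',k')=1$, the divisibility $n\mid(k-1)m$ is equivalent to $n'\mid k'm$, hence—using that $n'$ and $k'$ are coprime—to $n'\mid m$. Therefore $\Fix\phi_k=\{m\in\Z/n\Z\mid (n/d)\mid m\}=\langle n/d\rangle$.

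Finally I would record that $\langle n/d\rangle$ is a cyclic subgroup of $\Z/n\Z$ of order $n/\gcd(n,n/d)=n/(n/d)=d$, so $\Fix\phi_k\cong\Z/d\Z$, as claimed. I do not expect any genuine obstacle here: the statement is a routine fact about annihilators in cyclic groups, and the only step meriting a word of justification is the cancellation of the coprime factor $k'$ when passing from $n'\mid k'm$ to $n'\mid m$.
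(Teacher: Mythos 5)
Your proof is correct; the paper states this as an unproved \texttt{Observation}, and your argument --- reducing $\phi_k(m)=m$ to the congruence $(k-1)m\equiv 0\pmod n$ and solving it by cancelling the coprime factor $k'$ --- is exactly the routine verification the author leaves implicit. The final order count $n/\gcd(n,n/d)=d$ is also right, so nothing is missing.
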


\subsection{Good involutions of linear quandles}
The remainder of this paper is dedicated to studying good involutions of linear quandles. 
Since good involutions of trivial quandles are already classified (see \cite{symm-quandles-2}*{Prop.\ 3.1}), we will only turn our attention to linear quandles $\Lambda_{n,k}$ with $k\not\equiv 1\pmod{n}$.

\begin{prop}\label{prop:n-nontriv}
    Let $n\in\Z^+$ be a positive integer. Then the number of nontrivial linear quandles of order $n$ that have good involutions equals the number of elements of order $2$ in $(\Z/n\Z)^\times$. (This number is $1$ less than the $n$th term of OEIS sequence A$060594$ \cite{oeis2}.)
\end{prop}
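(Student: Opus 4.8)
The plan is to read the answer directly off the results of Section~\ref{sec:alex}. Throughout, recall that the linear quandles of order $n$ are precisely the Alexander quandles $\Lambda_{n,k}=\Alex(\Z/n\Z,\phi_k)$ with $k\in(\Z/n\Z)^\times$, parametrized by $k$ (distinct values of $k$ give genuinely distinct quandle operations on $\Z/n\Z$). Thus counting the nontrivial linear quandles of order $n$ that have good involutions amounts to counting the $k\in(\Z/n\Z)^\times$ for which $\Lambda_{n,k}$ is nontrivial and $\Good\Lambda_{n,k}\neq\emptyset$.

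First I would apply Theorem~\ref{cor:taniguchi} and Lemma~\ref{lem:q-kei} to $\Lambda_{n,k}$. The former gives that $\Good\Lambda_{n,k}\neq\emptyset$ if and only if $\Lambda_{n,k}$ is a kei, and the latter (with $A=\Z/n\Z$ and $\phi=\phi_k$) gives that this occurs exactly when $\phi_k$ is an involution, i.e.\ when $\phi_k^2=\id_{\Z/n\Z}$, i.e.\ when $k^2\equiv 1\pmod n$. Next, Example~\ref{ex:trivial-lin} gives that $\Lambda_{n,k}$ is trivial if and only if $k\equiv 1\pmod n$. Combining these, the admissible values of $k$ are exactly those with $k^2\equiv 1\pmod n$ and $k\not\equiv 1\pmod n$, i.e.\ the elements of order exactly $2$ in $(\Z/n\Z)^\times$; this proves the first assertion. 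For the parenthetical remark, note that any $x$ with $x^2\equiv 1\pmod n$ is its own inverse and hence a unit, so the $n$th term of A$060594$ equals $|\{k\in(\Z/n\Z)^\times:k^2\equiv 1\}|$, and removing the solution $k=1$ leaves the number of order-$2$ elements.

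There is essentially no obstacle: the substantive content is Theorem~\ref{cor:taniguchi} and Lemma~\ref{lem:q-kei}, both proved in Section~\ref{sec:alex}. The one point deserving a word of care is that ``linear quandle of order $n$'' must be understood as the parametrized object $\Lambda_{n,k}$ (equivalently, an Alexander-quandle structure on $\Z/n\Z$) rather than an isomorphism class of quandles: for instance, $\Lambda_{8,3}$ and $R_8=\Lambda_{8,-1}$ happen to be isomorphic as quandles (though not as Alexander quandles), so counting isomorphism classes would give a strictly smaller count for some $n$.
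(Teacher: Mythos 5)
Your proposal is correct and follows essentially the same route as the paper, whose proof is a one-line appeal to exactly the results you invoke: Lemma~\ref{lem:q-kei}, Theorem~\ref{cor:taniguchi}, and Example~\ref{ex:trivial-lin}, via the identification $\Aut(\Z/n\Z)\cong(\Z/n\Z)^\times$. Your closing caveat about counting parametrized quandles $\Lambda_{n,k}$ rather than isomorphism classes is also made by the paper, in the remark immediately following the proposition.
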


\begin{proof}
    Thanks to the group isomorphism $\Aut \Z/n\Z\cong(\Z/n\Z)^\times$, the claim follows from Lemma \ref{lem:q-kei}, Theorem \ref{cor:taniguchi}, and Example \ref{ex:trivial-lin}.
\end{proof}

\begin{rmk}
    The linear quandles counted in Proposition \ref{prop:n-nontriv} may not be pairwise nonisomorphic; for example, see Remark \ref{rmk:isom}.
\end{rmk}

\begin{example}
    It is well-known that $-1$ is the only element of order $2$ in $(\Z/n\Z)^\times$ if and only if $n$ equals $4$, a power of an odd prime, or twice the power of an odd prime. In this case, Proposition \ref{prop:n-nontriv} states that the only nontrivial linear quandle of order $n$ that has good involutions is the dihedral quandle $R_n=\Lambda_{n,-1}$; cf.\ Proposition \ref{prop:dihedral} and Tables \ref{tab:linear} and \ref{tab2}.
\end{example}

\subsubsection{Computer search}
Using Observation \ref{obs:gcd} and Theorem \ref{cor:taniguchi}, we implemented a \texttt{GAP} \cite{GAP4} program that exhaustively searches for good involutions of all linear quandles $\Lambda_{n,k}$ of a given order $n\geq 3$ counted in Proposition \ref{prop:n-nontriv}. We provide our code and raw data for all $3\leq n\leq 29$ in a GitHub repository \cite{code}. 

Table \ref{tab:linear} enumerates good involutions of all nontrivial linear quandles $\Lambda_{n,k}$ up to order $29$. The entries of Table \ref{tab:linear} were obtained using \texttt{GAP} program or, in the cases $(n,k)=(24,13)$ and $(n,k)=(28,15)$, using Theorem \ref{thm:diverge}. Outside of these two cases, we provide explicit mappings for the good involutions counted in Table \ref{tab:linear} in the same GitHub repository containing our \texttt{GAP} code \cite{code}. For descriptions of several of the subsequences recorded in Table \ref{tab:linear}, see Proposition \ref{prop:dihedral}, Theorem \ref{thm:diverge}, and Conjecture \ref{conjecture}.

Summing the entries of Table \ref{tab:linear}, Table \ref{tab2} displays the number of good involutions of all linear quandles of a given order $n\leq 29$. Note that the counts in Tables \ref{tab:linear} and \ref{tab2} are up to equality rather than up to isomorphism; cf.\ Remark \ref{rmk:isom}.

\begin{longtable}{cccc}
\caption{Number of good involutions of nontrivial linear quandles $\Lambda_{n,k}$ up to order $29$.}
\label{tab:linear} \\

\multicolumn{1}{c}{$(n,k)$} & \multicolumn{1}{c}{$|{\Good(\Lambda_{n,k})}|$} \\ \hline 
\endfirsthead

\caption[]{(continued)}\\
\multicolumn{2}{c}%
{}\\
\multicolumn{1}{c}{$(n,k)$} & \multicolumn{1}{c}{$|{\Good(\Lambda_{n,k})}|$} \\ \hline 
\endhead
\endfoot\endlastfoot

$(3,-1)$                                     & 1                                        \\
$(4,-1)$                                      & 4                                         \\
$(5,-1)$                                      & 1                                         \\
$(6,-1)$                                      & 2                                         \\
$(7,-1)$                                      & 1                                         \\
$(8,-1)$                                      & 4                                         \\
$(8,3)$                                      & 4                                         \\
$(8,5)$                                      & 36                                        \\
$(9,-1)$                                     & 1                                      \\
$(10,-1)$                                    & 2                                       \\
$(11,-1)$                                    & 1                                         \\
$(12,-1)$                                      & 4                                       \\
$(12,5)$                                   & 10                                        \\
$(12,7)$                                   & 400                                      \\
$(13,-1)$                                   & 1                                        \\
$(14,-1)$                                   & 2                                         \\
$(15,-1)$                             & 1                                       \\
$(15,4)$                                     & 4                                      \\
$(15,11)$                                   & 26                                        \\
$(16,-1)$                                    & 4                                        \\
$(16,7)$                                    & 4                                       \\
$(16,9)$                                     & 5776                                       \\
$(17,-1)$                                    & 1                                        \\
$(18,-1)$                                     & 2                                         \\
$(19,-1)$                                    & 1                                        \\
$(20,-1)$                                    & 4                                       \\
$(20,9)$                                    & 10                                         \\
$(20,11)$     & 97344                                \\
$(21,-1)$                  & 1                                \\
$(21,8)$                   & 232                                \\
$(21,13)$                  & 4                              \\
$(22,-1)$                                   & 2                                      \\
$(23,-1)$                             & 1                                    \\
$(24,-1)$                              & 4                                     \\
$(24,5)$     & 36                               \\
$(24,7)$                     & 400\\
$(24,11)$                       & 4                              \\
$(24,13)$     & 1915456                                \\
$(24,17)$                                     & 764                                      \\
$(24,19)$                                   & 400                                        \\
$(25,-1)$                                    & 1                                       \\
$(26,-1)$                     & 2                                  \\
$(27,-1)$     & 1                                        \\
$(28,-1)$                      & 4                                   \\
$(28,13)$                                    & 10                                       \\
$(28,15)$                               & 42406144                                    \\
$(29,-1)$                                     & 1                                         
\end{longtable}

\begin{table}[h]
\caption{Number of good involutions of all linear quandles $\Lambda_{n,k}$ of a given order $n$.}
\label{tab2}
\begin{tabular}{cc}
$n$ & Symmetric quandles $(\Lambda_{n,k},\rho)$ \\ \hline
3     & 1                          \\
4     & 4                          \\
5     & 1                          \\
6     & 2                          \\
7     & 1                          \\
8     & 44                         \\
9     & 1                          \\
10    & 2                          \\
11    & 1                          \\
12    & 414                        \\
13    & 1                          \\
14    & 2                          \\
15    & 31                         \\
16    & 5784                       \\
17    & 1                          \\
18    & 2                          \\
19    & 1                          \\
20    & 97358                      \\
21    & 237                        \\
22    & 2                          \\
23    & 1                          \\
24    & 1917064                    \\
25    & 1                          \\
26    & 2                          \\
27    & 1                          \\
28    & 42406158                   \\
29    & 1                         
\end{tabular}
\end{table}

\subsubsection{General results}

In the following, we consider several particular families of linear quandles counted in Proposition \ref{prop:n-nontriv}. First, we give an alternative proof of a well-known result of Kamada and Oshiro \cite{symm-quandles-2}.
\begin{prop}[\cite{symm-quandles-2}*{Thm.\ 3.2}]\label{prop:dihedral}
    Let $n\in\Z^+$ be a positive integer. Then the number of good involutions of the dihedral quandle $R_n$ is
    \[
    |{\Good R_n}|=\begin{cases}
        1&\text{if }n\text{ is odd,}\\
        2&\text{if }n=2m\text{ with }m\text{ odd,}\\
        4&\text{otherwise.}
    \end{cases}
    \]
\end{prop}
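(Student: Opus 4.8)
The plan is to apply the machinery developed for Alexander quandles, specializing to $A=\Z/n\Z$ and $\phi=\phi_{-1}$. First I would invoke Theorem~\ref{cor:taniguchi}: since $\phi_{-1}$ is an involution, $R_n=\Alex(\Z/n\Z,\phi_{-1})$ is a kei, so $\Good R_n$ is nonempty and every good involution has the form $\rho(a)=\psi(a)+a$ for some $\psi:\Z/n\Z\to\Fix\phi_{-1}$ satisfying $\psi s_a=\psi=\psi\rho$. By Observation~\ref{obs:gcd} with $d=\gcd(n,-2)$, we have $\Fix\phi_{-1}=\langle n/d\rangle$, which is the subgroup $T$ of $2$-torsion elements of $\Z/n\Z$: it is trivial when $n$ is odd, cyclic of order $2$ when $n\equiv 2\pmod 4$, and cyclic of order $2$ as well when $4\mid n$ — wait, more precisely $|T|=2$ whenever $n$ is even. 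So the three cases in the statement do not simply track $|\Fix\phi_{-1}|$, and the subtlety is that $R_n$ need not be connected.

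The key dichotomy is connectedness of $R_n$. By Lemma~\ref{lem:taka-conn}, $T(A)=R_n$ is connected iff $\Z/n\Z$ is $2$-divisible, i.e.\ iff $n$ is odd. In the \emph{odd} case, Corollary~\ref{cor:good-taka} applies directly: $|{\Good R_n}|=|T|=1$, giving the first case. In the \emph{even} case $R_n$ is disconnected — its connected components are the two cosets of $2\Z/n\Z$ — so I must use the full strength of Theorem~\ref{cor:taniguchi}/Remark~\ref{rmk:core} rather than the connected corollary. Here $|\oo(R_n)|=2$, and a good involution corresponds to a function $\psi^*:\oo(R_n)\to\Fix\phi_{-1}\cong\Z/2\Z$, constant on each of the two components, subject to the compatibility $\psi=\psi\rho$ where $\rho(a)=\psi^*\pi(a)+a$. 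Since $\psi^*$ takes only two values on two components, there are at most $2^2=4$ candidates, matching the upper bound $|S|^{|\oo(X)|}$ of Corollary~\ref{cor:good-bound}.

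The main work — and the expected obstacle — is deciding, in the even case, which of these four candidate functions $\psi^*$ actually yield good involutions, i.e.\ satisfy the condition $\psi=\psi\rho$ (the condition $\psi s_a=\psi$ holds automatically since $\psi$ is constant on components and $s_a$ preserves components). The map $\rho$ may swap the two components or fix each setwise, depending on the parity of the value $\psi^*$ takes and on $n\bmod 4$; one must check whether $\psi^*$ is invariant under the induced permutation of $\oo(R_n)$. When $n=2m$ with $m$ odd, the element $n/d=m$ of $\Fix\phi_{-1}$ is \emph{odd}, so adding it swaps the two components, forcing $\psi^*$ to be constant across both components; this kills two of the four candidates, leaving $|{\Good R_n}|=2$. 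When $4\mid n$, the element $m=n/2$ is \emph{even}, so $\rho$ preserves each component regardless, all four candidates survive, and $|{\Good R_n}|=4$. I would organize this as a short case analysis on $n\bmod 4$, verifying in each subcase exactly which assignments $\psi^*:\{[0],[1]\}\to\{0,n/2\}$ satisfy $\psi=\psi\rho$, and then conclude. As a sanity check, these counts agree with the $(n,-1)$ rows of Table~\ref{tab:linear}.
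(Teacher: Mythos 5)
Your proposal is correct and follows essentially the same route as the paper: both reduce to Theorem~\ref{cor:taniguchi}, note $\Fix\phi_{-1}=\{0\}$ in the odd case, and in the even case $n=2m$ enumerate the four candidate functions $\psi$ determined by their values on the two components $2A$ and $1+2A$, verifying that the two non-constant candidates survive exactly when $m$ is even. Your explicit parity argument for when adding $m$ swaps the components is precisely the ``straightforward check'' the paper leaves implicit, and your detour through Corollary~\ref{cor:good-taka} for odd $n$ is a harmless variant of the paper's direct appeal to Theorem~\ref{cor:taniguchi}.
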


\begin{proof}
    Let $A:=\Z/n\Z$, and let $\phi_{-1}:A\to A$ be the inversion map $x\mapsto-x$. If $n$ is odd, then $\Fix\phi_{-1}=\{0\}$, so the claim follows directly from Theorem \ref{cor:taniguchi}.

    Otherwise, write $n=2m$ with $m\in\Z^+$. Evidently, the connected components of $R_n$ are $\oo(R_n)=\{2A,1+2A\}$. On the other hand, $\Fix \phi_{-1}=\{0,m\}$.
    
    By Corollary \ref{cor:good-bound}, $R_n$ has at most four good involutions $\rho_i$. By Theorem \ref{cor:taniguchi} (cf.\ Remark \ref{rmk:core}), each $\rho_i$ is induced by exactly one of the following functions $\psi_i:A\to \Fix\phi_{-1}$ via the formula $\rho_i(a):=\psi(a)+a$:
    \[
    \psi_1(a):= 0,\quad \psi_2(a):=m,\quad \psi_3(a):=\begin{cases}
        0&\text{if }a\text{ is even,}\\
        m &\text{otherwise,}
    \end{cases}\quad
    \psi_4(a):=\begin{cases}
        m&\text{if }a\text{ is even,}\\
        0 &\text{otherwise.}
    \end{cases}
    \]
    For $i=1,2$, it is clear that $\psi s_a=\psi_i=\psi_i\rho_i$ for all $a\in A$. A straightforward check shows that the same holds for $i=3$ if and only if $m$ is even, and similarly for $i=4$. Hence, Theorem \ref{cor:taniguchi} completes the proof.
\end{proof}

In contrast to Proposition \ref{prop:dihedral}, the next theorem shows that the sequence $(|{\Good \Lambda_{4n,2n+1}}|)_{n\geq 1}$ diverges quite rapidly as $n$ tends toward infinity. Our \texttt{GAP} program verified this theorem for all $n\leq 5$.\footnote{On the author's computer, we were unable to complete the computation for $n=6,7$ using our \texttt{GAP} program. This is why the explicit elements of and $\Good \Lambda_{24,13}$ and $\Good \Lambda_{28,15}$ do not appear in our GitHub repository \cite{code} even though their cardinalities appear in Table \ref{tab:linear}.} We begin with a well-known combinatorial lemma.

\begin{lemma}\label{lemma:trans}
    Let $n\in\Z^+$ be a positive integer, let $X$ be a set of size $n$, and let $0\leq i\leq \lfloor n/2\rfloor$ be an integer. Then the number of involutions in $S_X$ having $i$ transpositions equals
    \[
    \binom{n}{2i}(2i-1)!!=\frac{n!}{(n-2i)!i!2^i}.
    \]
\end{lemma}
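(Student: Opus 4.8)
The plan is to count involutions in $S_X$ by their cycle type, which for an involution is determined entirely by the number $i$ of transpositions (the remaining $n-2i$ points being fixed). First I would choose which $2i$ elements of $X$ are moved by the involution; there are $\binom{n}{2i}$ ways to do this. The remaining task is to count the number of fixed-point-free involutions on a set of size $2i$, equivalently the number of perfect matchings on $2i$ points, which is the double factorial $(2i-1)!!$. Multiplying gives $\binom{n}{2i}(2i-1)!!$, and the alternative closed form follows by writing $(2i-1)!! = \frac{(2i)!}{i!\,2^i}$ and simplifying $\binom{n}{2i}\cdot\frac{(2i)!}{i!\,2^i} = \frac{n!}{(n-2i)!\,i!\,2^i}$.

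For the count of perfect matchings on a $2i$-element set, I would give the standard recursive argument: fix one particular element; it must be paired with one of the other $2i-1$ elements, giving $2i-1$ choices, and then the remaining $2i-2$ elements are matched in $(2i-3)!!$ ways by induction, so the total is $(2i-1)(2i-3)\cdots 3\cdot 1 = (2i-1)!!$, with the base case $i=0$ (the empty matching) contributing $1$. Alternatively one can count ordered sequences of $2i$ elements in $(2i)!$ ways, then observe that each matching is obtained from $2^i\cdot i!$ such sequences (reorder the $i$ pairs, and flip each pair), yielding $(2i)!/(2^i\,i!)$ directly; this simultaneously proves the identity $(2i-1)!! = (2i)!/(2^i\,i!)$.

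There is no real obstacle here — this is a classical enumeration and the only thing to be careful about is bookkeeping in the algebraic simplification of the binomial expression, together with stating the base case $i=0$ so the formula holds at the endpoint. I would present the double-counting argument (ordered sequences versus matchings) as the cleanest route, since it delivers both the combinatorial count and the factorial identity in one stroke, and then note that substituting into $\binom{n}{2i}(2i-1)!!$ and canceling $(2i)!$ against itself yields $\frac{n!}{(n-2i)!\,i!\,2^i}$, completing the proof.
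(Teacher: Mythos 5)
Your argument is correct and complete: the decomposition into a choice of the $2i$ moved points followed by a perfect matching count, together with the identity $(2i-1)!!=(2i)!/(2^i\,i!)$, is exactly the standard proof of this classical fact. The paper itself states Lemma \ref{lemma:trans} as well known and offers no proof, so there is nothing to compare against; your write-up simply supplies the routine verification the paper omits.
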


\begin{thm}\label{thm:diverge}
    Let $n\in\Z^+$ be a positive integer, and let $k:=2n+1$. Then the number of good involutions of the linear quandle $\Lambda_{4n,k}$ is
    \[
    |{\Good \Lambda_{4n,k}}|=\left(\sum^{\lfloor n/2\rfloor}_{i=0}\frac{n!}{(n-2i)!i!}2^{n-2i}\right)^2.
    \]
    In other words, the integer sequence $(|{\Good\Lambda_{4n,k}}|)_{n\geq 1}$ is precisely the OEIS sequence A$202828$ \cite{oeis}.
\end{thm}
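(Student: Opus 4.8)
The plan is to apply Theorem~\ref{cor:taniguchi} together with the combinatorial bookkeeping in Lemma~\ref{lemma:trans}. Set $A:=\Z/4n\Z$ and $\phi:=\phi_k$ with $k=2n+1$. The first step is to check the basic invariants: since $k^2=(2n+1)^2=4n^2+4n+1\equiv 1\pmod{4n}$, the map $\phi$ is an involution, so $Q=\Lambda_{4n,k}$ is a kei by Lemma~\ref{lem:q-kei} and $\Good Q\neq\emptyset$ by Theorem~\ref{cor:taniguchi}. Next I would identify the fixed points: by Observation~\ref{obs:gcd}, $\Fix\phi=\langle 4n/d\rangle$ where $d=\gcd(4n,k-1)=\gcd(4n,2n)=2n$, so $\Fix\phi=\langle 2\rangle=2A$, a cyclic group of order $2n$. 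The key structural fact is then to compute the connected components $\oo(Q)$: by Example~\ref{lem:conn-alex}, $\alpha(a)=a-\phi(a)=a-(2n+1)a=-2na$, whose image is $\langle 2n\rangle=\{0,2n\}$; a standard orbit computation (the orbit of $a$ under $\Inn Q$ is the coset $a+\operatorname{im}\alpha$) shows the connected components are the $2n$ cosets of $\{0,2n\}$ in $A$, i.e.\ $\oo(Q)$ has $2n$ elements, each of size $2$.

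By Remark~\ref{rmk:core} and Theorem~\ref{cor:taniguchi}, every good involution $\rho$ is determined by a function $\psi^*:\oo(Q)\to\Fix\phi=2A$ satisfying the compatibility condition $\psi^*\pi=\psi^*\pi\circ\rho$, and conversely; so I must count exactly which such $\psi^*$ arise. The natural move is to set up coordinates that split the problem into two independent halves. Writing $A=2A\sqcup(1+2A)$, note $\rho(a)=\psi^*\pi(a)-\phi(a)=\psi^*\pi(a)-(2n+1)a$; since $\phi$ preserves the parity decomposition and $\Fix\phi=2A$, the map $\rho$ restricts to a permutation of the even residues and (separately) a permutation of the odd residues, and in fact on each half $\rho$ is an affine involution. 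I expect the count to factor as a product over these two halves — explaining the square in the formula — with each factor counting the number of valid $\psi$-data on a set of size $2n$. The main technical content is to show that choosing $\psi^*$ compatibly with $\rho$ on (say) the even part is equivalent to choosing an involution on a suitable $2n$-element index set, weighted so that an involution with $i$ transpositions contributes a factor $2^{n-2i}$ times a multinomial coefficient; summing over $i$ via Lemma~\ref{lemma:trans} (rewritten as $\binom{n}{2i}(2i-1)!!\,2^{?}$, matching $\tfrac{n!}{(n-2i)!\,i!}2^{n-2i}$ after reindexing the fixed points of the involution as free binary choices) gives the inner sum $\sum_{i=0}^{\lfloor n/2\rfloor}\tfrac{n!}{(n-2i)!\,i!}2^{n-2i}$.

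Concretely, the bijection I would construct is: on the even part $2A\cong\Z/2n\Z$, the condition $\psi s_a=\psi=\psi\rho$ forces $\psi^*$ to be constant on $\rho$-orbits; since $\rho$ is an involution on the $2n$-element set $2A$, its orbits are its fixed points and its $2$-cycles, and the residual freedom in choosing $\psi^*$ on each orbit (subject to landing in $\Fix\phi=2A$ and to $\rho$ itself being forced by $\psi^*$) collapses to: a free choice among $2$ values at each of the $n-2i$ "unpaired" coordinates once the involution pattern with $i$ transpositions is fixed. Carrying this out requires being careful about the self-referential nature of the pair $(\rho,\psi^*)$ — $\psi^*$ determines $\rho$ which in turn constrains $\psi^*$ — so the cleanest route is to parametrize directly by the pair (involution $\sigma$ on $2A$, assignment of an element of a $2$-element set to each fixed point of $\sigma$) and verify this data is in bijection with the valid $\psi^*$ on the even half; the odd half is handled identically, and independence of the two halves yields the product, hence the square.

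The hard part will be pinning down the exact correspondence in that last paragraph — i.e.\ verifying that the compatibility constraints genuinely decouple into "choose an involution, then make $n-2i$ independent binary choices" with no overcounting and no hidden obstructions — and confirming that the resulting sum matches A$202828$; the parity arithmetic $k^2\equiv1$, $d=2n$, and the component count are routine, but translating Remark~\ref{rmk:core}'s fixed-point condition into the clean combinatorial model is where all the care is needed. Once the model is set up correctly, Lemma~\ref{lemma:trans} closes it immediately.
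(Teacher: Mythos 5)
Your setup matches the paper's: the reduction via Theorem \ref{cor:taniguchi} and Remark \ref{rmk:core}, the computation $\Fix\phi_k=2A$, the identification of the $2n$ connected components $\{m,m+2n\}$, the observation that $\rho$ preserves parity so the count factors as a square, and the use of Lemma \ref{lemma:trans} are all exactly what the paper does. But the concrete combinatorial model you commit to in your last two paragraphs is wrong, and it is wrong precisely at the point you flag as "where all the care is needed." You propose to parametrize the valid data on the even half by a pair (involution $\sigma$, one binary choice at each \emph{fixed point} of $\sigma$), i.e.\ a weight of $2^{n-2i}$ for an involution with $i$ transpositions. The correct weight is $2^{n-i}$: there are two admissible values of $\psi^*$ on \emph{every} orbit of the induced involution, including each transposed pair. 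Indeed, if $\bar\rho$ sends the component $\oo_m$ to $\oo_{m'}$, then $\psi^*(\oo_m)$ is forced to satisfy $\psi^*(\oo_m)\equiv m+m'\pmod{2n}$, which leaves exactly two candidates in $2A$; for a transposed pair the two candidates at $\oo_m$ and at $\oo_{m'}$ lie in the same congruence class, so the constraint $\psi^*(\oo_m)=\psi^*(\oo_{m'})$ still leaves two joint choices rather than one. Your model therefore yields $\sum_i \frac{n!}{(n-2i)!\,i!\,2^i}\,2^{n-2i}$ per half, which for $n=2$ gives $5^2=25$, not the correct $36=6^2$ recorded in Table \ref{tab:linear} for $\Lambda_{8,5}$. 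The paper's count $\frac{n!}{(n-2i)!\,i!\,2^i}\cdot 2^{n-i}=\frac{n!}{(n-2i)!\,i!}2^{n-2i}$ only matches the stated formula because the $2^i$ from the transposition orbits cancels the $2^i$ in the denominator of Lemma \ref{lemma:trans}.

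A secondary point of confusion: you describe the involution as living on "a suitable $2n$-element index set" (and at one point on $2A$ itself, which has $2n$ elements), but the multinomial $\frac{n!}{(n-2i)!\,i!}$ in the target formula corresponds to involutions of an $n$-element set. The right index set is the set of $n$ connected components of a fixed parity (equivalently, the quotient of $2A$ by the component relation), on which $\rho$ descends to an involution because good involutions permute connected components. Once you correct both of these — involutions on the $n$ even-type components, two choices per orbit rather than per fixed point — your argument coincides with the paper's proof and Lemma \ref{lemma:trans} closes it as you intended.
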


\begin{proof}
    Let $A:=\Z/4n\Z$, and recall from Theorem \ref{cor:taniguchi} that every good involution $\rho\in\Good \Lambda_{n,k}$ is induced by a unique function $\psi^*:\oo(\Lambda_{4n,k})\to\Fix\phi_k$.
    By \cite{orbits}*{Thm.\ 1}, $\Lambda_{4n,k}$ has exactly $2n$ connected components, namely the orbits
    \begin{equation}\label{eq:oom}
        \oo_m:=\{m,m+2n\},\qquad 0\leq m<2n.
    \end{equation}
    On the other hand, $\Fix \phi_k=2A$.

    Let $\pi:A\surj\oo(\Lambda_{4n,k})$ be the natural projection. By Theorem \ref{cor:taniguchi} (cf.\ Remark \ref{rmk:core}), it suffices to count all functions $\psi^*:\oo(\Lambda_{4n,k})\to\Fix\phi_k$ such that $\psi=\psi\rho$, where $\psi:A\to \Fix\phi_k$ and $\rho:A\to A$ are defined by
    \begin{equation}\label{eq:rho}
        \psi:=\psi^*\pi, \qquad\rho(m):=\psi(m)-km
    \end{equation}
    for all $m\in A$. In particular, $\psi$ must be constant on each orbit \[\mathcal{R}_m:=\{m,\rho(m)\}\] 
    of the $\Z/2\Z$-action of $\rho$ on $A$. Moreover, \eqref{eq:rho} implies that $\psi(m)\equiv\rho(m)+m\pmod{2n}$ or, equivalently,
    \[
    \psi(m)\in\{\rho(m)+m,\rho(m)+m+2n\}.
    \]
    Conversely, both of these values of $\psi(m)$ achieve the equality $\psi(m)=\psi\rho(m)$, with $\rho$ as defined in \eqref{eq:rho}. Hence, for each connected component $\oo_m\in\oo(\Lambda_{4n,k})$ and for each orbit $\mathcal{R}_m$, there exist exactly two assignments $\oo_m\mapsto \psi^*(\oo_m)$ that achieve the desired equality $\psi(m)=\psi\rho(m)$.

    Given $\psi^*$ satisfying the above conditions,  \eqref{eq:rho} shows that $\rho$ preserves parity; that is, $\rho$ restricts to an involution of $2A$ and similarly for the coset $1+2A$. Let $i$ be the number of orbits $\mathcal{R}_m$ such that $m$ is even and $|\mathcal{R}_m|=2$, and let $\psi^*_{2A}$ denote the restriction of $\psi^*$ to connected components of the form $\oo_m$ with $m$ even. Then $2A$ has exactly $n-i$ orbits $\mathcal{R}_m$ under the action of $\rho$.
    
    By the above discussion, each orbit $\mathcal{R}_m$ contributes exactly two values for $\psi^*(m)$ that achieve the desired equalities. This yields exactly $2^{n-i}$ functions $\psi^*_{2A}$ that induce an involution $\rho|_{2A}$ of $2A$ satisfying the above constraints and having a given orbit decomposition with $i$ orbits of size $2$. 
    
    The number of all possible orbit decompositions of $2A$ under the action of $\rho|_{2A}$ with $i$ orbits of size $2$ is given by Lemma \ref{lemma:trans}. Multiplying by $2^{n-i}$ as above and summing over all possible values of $i$, we deduce that there are exactly
    \[
    \sum^{\lfloor n/2\rfloor}_{i=0}\frac{n!}{(n-2i)!i!2^i}2^{n-i}=\sum^{\lfloor n/2\rfloor}_{i=0}\frac{n!}{(n-2i)!i!}2^{n-2i}
    \]
    functions $\psi^*_{2A}$ that induce an involution $\rho|_{2A}$ of $2A$ satisfying the above constraints. The same goes for functions $\psi^*_{1+2A}$ that induce an involution $\rho|_{1+2A}$ of $1+2A$ satisfying the above constraints. Since $\rho$ must preserve parity, any two choices of $\psi^*_{2A}$ and $\psi^*_{1+2A}$ both satisfying the above conditions yield a function $\psi^*$ satisfying the conditions of Theorem \ref{cor:taniguchi}, so the claim follows.
\end{proof}

\begin{rmk}
   Theorem \ref{thm:diverge} confirms that the limit superior of the integer sequence in the second column of Table \ref{tab2} equals infinity.
\end{rmk}

We conclude with the following conjecture, which our \texttt{GAP} program confirmed for $n=3,5,7$.

\begin{conj}\label{conjecture}
    Let $n\geq 3$ be an odd integer, and let $k:=2n-1$. Then $|{\Good\Lambda_{4n,k}}|=10$.
\end{conj}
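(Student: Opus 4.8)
The plan is to apply Theorem~\ref{cor:taniguchi} and reduce the count to the enumeration of involutions of a four-element set, in the spirit of the proof of Theorem~\ref{thm:diverge}. Write $Q:=\Lambda_{4n,k}$ with $k=2n-1$. Since $k^2=4n^2-4n+1\equiv 1\pmod{4n}$, the automorphism $\phi_k$ is an involution, so $Q$ is a kei by Lemma~\ref{lem:q-kei} and Theorem~\ref{cor:taniguchi} applies. Because $n$ is odd, $4\mid k-1=2(n-1)$, hence $\gcd(4n,k-1)=\gcd(4n,2n-2)=4$, and Observation~\ref{obs:gcd} gives $\Fix\phi_k=\langle n\rangle=\{0,n,2n,3n\}\cong\Z/4\Z$. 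The same gcd computation applied to $\alpha(a)=a-\phi_k(a)=(2-2n)a$ shows $\alpha(\Z/4n\Z)=\langle 4\rangle$, so by \cite{orbits}*{Thm.\ 1} (as in the proof of Theorem~\ref{thm:diverge}) the connected components of $Q$ are precisely the four residue classes $\oo_r:=r+\langle 4\rangle$, $r\in\{0,1,2,3\}$, each of size $n$.

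Next I would recast Theorem~\ref{cor:taniguchi} as a counting problem. By that theorem together with Remark~\ref{rmk:core}, good involutions of $Q$ biject with functions $\psi^*\colon\{\oo_0,\dots,\oo_3\}\to\Fix\phi_k$, which I encode as tuples $c=(c_0,c_1,c_2,c_3)\in(\Z/4\Z)^4$ via $\psi^*(\oo_r)=c_r n$. Setting $\psi:=\psi^*\pi$, the conditions that $\psi$ take values in $\Fix\phi_k$ and that $\psi s_a=\psi$ hold automatically, so the only remaining condition is $\psi=\psi\rho$. A short computation gives $\rho(a)=\psi(a)-ka=c'_r n+a$ for $a\in\oo_r$, where $c'_r:=c_r$ when $r$ is even and $c'_r:=c_r+2$ when $r$ is odd (since $-2na\equiv 0\pmod{4n}$ for $a$ even and $-2na\equiv 2n\pmod{4n}$ for $a$ odd). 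Hence $\rho$ maps $\oo_r$ into $\oo_{\bar\rho(r)}$, where $\bar\rho(r):=(r+\varepsilon c'_r)\bmod 4$ and $\varepsilon:=n\bmod 4\in\{1,3\}$. Because $\psi^*$ is constant on components and $n$ has additive order $4$ in $\Z/4n\Z$, the condition $\psi=\psi\rho$ is equivalent to $c_r=c_{\bar\rho(r)}$ for all $r$. Thus $|{\Good Q}|$ equals the number of tuples $c\in(\Z/4\Z)^4$ satisfying $c_r=c_{\bar\rho(r)}$ for every $r$, where throughout $\bar\rho=\bar\rho(c)$ is the self-map of $\{0,1,2,3\}$ built from $c$ as above.

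To finish, note that since $\varepsilon$ is a unit modulo $4$, the assignment $c\mapsto\bar\rho(c)$ is a bijection from $(\Z/4\Z)^4$ onto the set of all $4^4$ self-maps of $\{0,1,2,3\}$ (recover $c$ from $\bar\rho$ via $\varepsilon c'_r=\bar\rho(r)-r$). The key claim is that, under this bijection, the tuples $c$ satisfying $c_r=c_{\bar\rho(c)(r)}$ for all $r$ correspond precisely to the involutions of $\{0,1,2,3\}$. Granting this, $|{\Good Q}|$ is the total number of involutions of a four-element set, which by Lemma~\ref{lemma:trans} (summed over the number of transpositions) equals $1+6+3=10$ — the identity, the six transpositions, and the three products of two disjoint transpositions.

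It remains to verify the key claim, which I would do by two brief congruence computations, carefully tracking the parities of $r$ and $\bar\rho(r)$. For the forward implication, assume $c_r=c_{\bar\rho(r)}$ for all $r$; using $c'_s\equiv c_s\pmod 2$ and $\bar\rho(s)\equiv s+c_s\pmod 2$ one finds $c'_r+c'_{\bar\rho(r)}\equiv 0\pmod 4$, so $\bar\rho^2(r)\equiv r+\varepsilon\bigl(c'_r+c'_{\bar\rho(r)}\bigr)\equiv r\pmod 4$ and $\bar\rho$ is an involution. Conversely, if $\bar\rho$ is an involution then $c'_r\equiv\varepsilon^{-1}\bigl(\bar\rho(r)-r\bigr)\pmod 4$, and applying $\bar\rho^2=\id$ together with the fact that $\varepsilon^{-1}$ is odd shows that $c_r-c_{\bar\rho(r)}$ is twice an even integer, i.e.\ $c_r\equiv c_{\bar\rho(r)}\pmod 4$. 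I expect the only genuine obstacle to be getting this modular bookkeeping right — reducing modulo $2$ at the appropriate steps and handling the $+2$ correction for odd indices — after which the conclusion is immediate; the structural computations and the reduction to counting are routine. (As a sanity check, for $\varepsilon=1$ this scheme yields the four constant tuples $(j,j,j,j)$ together with $(0,2,0,2)$, $(0,2,1,1)$, $(0,3,3,2)$, $(1,1,0,2)$, $(2,0,2,0)$, $(3,2,0,3)$, giving $10$ and matching Table~\ref{tab:linear} for $n=3,5,7$.)
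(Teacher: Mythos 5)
The statement you are addressing is left \emph{open} in the paper: it appears only as Conjecture~\ref{conjecture}, with computational verification for $n=3,5,7$, so there is no proof of the paper's to compare against. Your argument follows the same template as the paper's proof of Theorem~\ref{thm:diverge} --- reduce via Theorem~\ref{cor:taniguchi} and Remark~\ref{rmk:core} to counting functions $\psi^*\colon\oo(Q)\to\Fix\phi_k$ satisfying $\psi=\psi\rho$, then convert that to an elementary combinatorial count --- but the combinatorial core here (a bijection with involutions of a four-element set, rather than the parity-preserving analysis of Theorem~\ref{thm:diverge}) is new, and as far as I can check it is correct and complete. The structural reductions are all sound: $k^2\equiv 1\pmod{4n}$, $\gcd(4n,2n-2)=4$ for odd $n$, $\Fix\phi_k=\langle n\rangle$, the four components $r+\langle 4\rangle$ of size $n$, the formula $\rho(a)=c'_rn+a$ with the $+2$ correction for odd $r$, the induced map $\bar\rho(r)=r+\varepsilon c'_r$ on components, and the equivalence of $\psi=\psi\rho$ with $c_r=c_{\bar\rho(r)}$ (using that $n$ has additive order $4$). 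Your key claim also checks out in both directions. Writing $\delta_s\in\{0,1\}$ for the parity of $s$, one has $c'_s\equiv c_s\equiv \bar\rho(s)-s\pmod 2$; if $c_{\bar\rho(r)}=c_r$ then $c'_r+c'_{\bar\rho(r)}=2c_r+2(\delta_r+\delta_{\bar\rho(r)})\equiv 2c_r+2c_r\equiv 0\pmod 4$, so $\bar\rho^2=\id$; conversely if $\bar\rho^2=\id$ then $c_r-c_{\bar\rho(r)}=\bigl(c'_r+c'_{\bar\rho(r)}\bigr)-2c'_{\bar\rho(r)}-2\bigl(\delta_r-\delta_{\bar\rho(r)}\bigr)\equiv -2c'_{\bar\rho(r)}-2\bigl(r-\bar\rho(r)\bigr)\equiv -2\bigl(r-\bar\rho(r)\bigr)-2\bigl(r-\bar\rho(r)\bigr)\equiv 0\pmod 4$, using that $\varepsilon\inv$ is odd so that $2c'_{\bar\rho(r)}\equiv 2\bigl(\bar\rho^2(r)-\bar\rho(r)\bigr)=2\bigl(r-\bar\rho(r)\bigr)\pmod 4$. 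Since $c\mapsto\bar\rho$ bijects $(\Z/4\Z)^4$ with the self-maps of $\{0,1,2,3\}$, Lemma~\ref{lemma:trans} gives $1+6+3=10$, and your explicit list of ten tuples for $\varepsilon=1$ matches what this correspondence produces. So, modulo writing out these congruence checks in full, you have a proof of the conjecture.

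One point of disagreement with the paper that you should flag rather than gloss over: the paper's closing remark asserts that in the setting of Conjecture~\ref{conjecture} the components $\oo(\Lambda_{4n,k})$ are given by \eqref{eq:oom}, i.e.\ by the $2n$ pairs $\{m,m+2n\}$. That is incorrect (it appears to be carried over from the $k=2n+1$ case of Theorem~\ref{thm:diverge}): since the components of an Alexander quandle are the cosets of the image of $\alpha=\id-\phi_k$, and here that image is $\langle\gcd(4n,2n-2)\rangle=\langle 4\rangle$, the components are the four residue classes mod $4$, exactly as you compute. For $n=3$, $k=5$ one checks directly that the orbit of $0$ is $\{0,4,8\}$, not $\{0,6\}$. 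Your version is the right one, and the count of $10$ is consistent with Table~\ref{tab:linear} for $(12,5)$, $(20,9)$, and $(28,13)$.
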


We note that in the setting of Conjecture \ref{conjecture}, we have $\Fix\phi_k=\{0,n,2n,3n\}$, and $\oo(\Lambda_{4n,k})$ is given by \eqref{eq:oom}.

\begin{rmk}\label{rmk:isom}
    If we change the hypotheses of Conjecture \ref{conjecture} to make $n$ even, then $\Lambda_{4n,k}$ is isomorphic to the dihedral quandle $R_{4n}$ by \cite{alex}*{Cor.\ 2.2}. Hence, Proposition \ref{prop:dihedral} implies that $|{\Good\Lambda_{4n,k}}|=4$.
\end{rmk}

\bibliographystyle{amsplain}

\begin{bibdiv}
\begin{biblist}

\bib{embed}{article}{
    AUTHOR = {Akita, Toshiyuki},
     TITLE = {Embedding {A}lexander quandles into groups},
   JOURNAL = {J. Knot Theory Ramifications},
    VOLUME = {32},
      YEAR = {2023},
    NUMBER = {2},
     PAGES = {Paper No. 2350011, 4},
      ISSN = {0218-2165,1793-6527},
  review = {\MR{4564617}},
       URL = {https://doi.org/10.1142/S0218216523500116},
}

\bib{hopf}{article}{
    AUTHOR = {Andruskiewitsch, Nicol\'as}, 
    AUTHOR = {Gra\~na, Mat\'ias},
     TITLE = {From racks to pointed {H}opf algebras},
   JOURNAL = {Adv. Math.},
    VOLUME = {178},
      YEAR = {2003},
    NUMBER = {2},
     PAGES = {177--243},
      ISSN = {0001-8708,1090-2082},
  review = {\MR{1994219}},
       URL = {https://doi.org/10.1016/S0001-8708(02)00071-3},
}

\bib{sc}{misc}{
      author={Bonatto, Marco},
       title={On simply connected quandles},
        date={2025},
         url={https://arxiv.org/abs/2504.19109},
        note={Preprint, arXiv:2504.19109 [math.GR].},
}

\bib{cong}{misc}{
	title={The rack congruence condition and half congruences in racks}, 
	author={Burrows, Wayne},
    author = {Tuffley, Christopher},
	date={2024},
	url={https://arxiv.org/abs/2308.11852}, 
	note = {Preprint, arXiv:2308.11852 [math.RA].},
}

\bib{dihedral}{article}{,
    AUTHOR = {Carter, J.~Scott},
    author={Oshiro, Kanako},
    author={Saito, Masahico},
     TITLE = {Symmetric extensions of dihedral quandles and triple points of
              non-orientable surfaces},
   JOURNAL = {Topology Appl.},
    VOLUME = {157},
      YEAR = {2010},
    NUMBER = {5},
     PAGES = {857--869},
      ISSN = {0166-8641,1879-3207},
  review = {\MR{2593699}},
       URL = {https://doi.org/10.1016/j.topol.2009.12.002},
}

\bib{galkin-symm}{article}{,
    AUTHOR = {Clark, W.~Edwin},
    author= {Elhamdadi, Mohamed},
    author = {Hou, Xiang-dong},
    author = {Saito, Masahico},
    author = {Yeatman, Timothy},
     TITLE = {Connected quandles associated with pointed abelian groups},
   JOURNAL = {Pacific J. Math.},
    VOLUME = {264},
      YEAR = {2013},
    NUMBER = {1},
     PAGES = {31--60},
      ISSN = {0030-8730,1945-5844},
  review = {\MR{3079760}},
       URL = {https://doi.org/10.2140/pjm.2013.264.31},
}

\bib{residual}{article}{
    AUTHOR = {Dhanwani, Neeraj Kumar},
    AUTHOR = {Saraf, Deepanshi},
    AUTHOR = {Singh, Mahender},
     TITLE = {Fundamental $n$-quandles of links are residually finite},
   JOURNAL = {to appear in Bull. Lond. Math. Soc.},
       URL = {https://doi.org/10.1112/blms.70147},
}

\bib{eisermann}{article}{,
    AUTHOR = {Eisermann, Michael},
     TITLE = {Homological characterization of the unknot},
   JOURNAL = {J. Pure Appl. Algebra},
    VOLUME = {177},
      YEAR = {2003},
    NUMBER = {2},
     PAGES = {131--157},
      ISSN = {0022-4049,1873-1376},
   MRCLASS = {57M25 (55N35 57M27)},
  review = {\MR{1954330}},
       URL = {https://doi.org/10.1016/S0022-4049(02)00068-3},
}

\bib{survey}{article}{
      author={Elhamdadi, Mohamed},
       title={A survey of racks and quandles: {S}ome recent developments},
        date={2020},
        ISSN={1005-3867,0219-1733},
     journal={Algebra Colloq.},
      volume={27},
      number={3},
       pages={509\ndash 522},
         url={https://doi.org/10.1142/S1005386720000425},
      review={\MR{4141628}},
}

\bib{quandlebook}{book}{
      author={Elhamdadi, Mohamed},
      author={Nelson, Sam},
       title={Quandles: {A}n introduction to the algebra of knots},
      series={Student Mathematical Library},
   publisher={American Mathematical Society, Providence, RI},
        date={2015},
      volume={74},
        ISBN={978-1-4704-2213-4},
         url={https://doi.org/10.1090/stml/074},
      review={\MR{3379534}},
}

\bib{fenn}{article}{
      author={Fenn, Roger},
      author={Rourke, Colin},
       title={Racks and links in codimension two},
        date={1992},
        ISSN={0218-2165,1793-6527},
     journal={J. Knot Theory Ramifications},
      volume={1},
      number={4},
       pages={343\ndash 406},
         url={https://doi.org/10.1142/S0218216592000203},
      review={\MR{1194995}},
}

\bib{conjugation}{misc}{
      author={Filipi, Filip},
       title={Hayashi property for conjugation quandles},
        date={2025},
         url={https://arxiv.org/abs/2503.22377},
        note={Preprint, arXiv:2503.22377 [math.GR].},
}

\bib{GAP4}{manual}{
    organization={The GAP~Group},
       title={{GAP -- Groups, Algorithms, and Programming, Version 4.14.0}},
        date={2024},
         url={\url{https://www.gap-system.org}},
         note={\url{https://www.gap-system.org}},
}

\bib{mapping}{article}{
    AUTHOR = {Guilbault, Craig R.},
    author={Healy, Brendan Burns},
    author={Pietsch,
              Brian},
     TITLE = {Group boundaries for semidirect products with {$\Bbb{Z}$}},
   JOURNAL = {Groups Geom. Dyn.},
    VOLUME = {18},
      YEAR = {2024},
    NUMBER = {3},
     PAGES = {869--919},
      ISSN = {1661-7207,1661-7215},
  review = {\MR{4760265}},
       URL = {https://doi.org/10.4171/ggd/750},
}

\bib{oeis}{misc}{, 
    AUTHOR = {Hanna, Paul D.}, 
    TITLE = {Sequence {A}202828 in the {O}n-Line {E}ncyclopedia of {I}nteger {S}equences},
    date = {2010},
    NOTE = {{\url{https://oeis.org/A202828}. Accessed: 2025-8-15}},
}

\bib{virtual}{article}{
    AUTHOR = {Ho, Melinda},
    author = {Nelson, Sam},
     TITLE = {Symmetric enhancements of involutory virtual birack counting
              invariants},
   JOURNAL = {J. Knot Theory Ramifications},
    VOLUME = {27},
      YEAR = {2018},
    NUMBER = {5},
     PAGES = {1850032, 14},
      ISSN = {0218-2165,1793-6527},
  review = {\MR{3795396}},
       URL = {https://doi.org/10.1142/S0218216518500323},
}

\bib{hou}{article}{,
    AUTHOR = {Hou, Xiang-Dong},
     TITLE = {Finite modules over {$\Bbb Z[t,t^{-1}]$}},
   JOURNAL = {J. Knot Theory Ramifications},
    VOLUME = {21},
      YEAR = {2012},
    NUMBER = {8},
     PAGES = {1250079, 28},
      ISSN = {0218-2165,1793-6527},
  review = {\MR{2925432}},
       URL = {https://doi.org/10.1142/S0218216512500794},
}

\bib{hulpke}{article}{
    AUTHOR = {Hulpke, Alexander and Stanovsk\'y, David and Vojt\v echovsk\'y, Petr},
     TITLE = {Connected quandles and transitive groups},
   JOURNAL = {J. Pure Appl. Algebra},
    VOLUME = {220},
      YEAR = {2016},
    NUMBER = {2},
     PAGES = {735--758},
      ISSN = {0022-4049,1873-1376},
  review = {\MR{3399387}},
       URL = {https://doi.org/10.1016/j.jpaa.2015.07.014},
}

\bib{orbits}{article}{
    AUTHOR = {Iijima, Yusuke}, 
    AUTHOR = {Murao, Tomo},
     TITLE = {On connected component decompositions of quandles},
   JOURNAL = {Tokyo J. Math.},
    VOLUME = {42},
      YEAR = {2019},
    NUMBER = {1},
     PAGES = {63--82},
      ISSN = {0387-3870},
  review = {\MR{3982050}},
       URL = {https://doi.org/10.3836/tjm/1502179252},
}

\bib{multiple}{article}{
    AUTHOR = {Ishii, Atsushi},
     TITLE = {A multiple conjugation quandle and handlebody-knots},
   JOURNAL = {Topology Appl.},
    VOLUME = {196},
      YEAR = {2015},
     PAGES = {492--500},
      ISSN = {0166-8641,1879-3207},
  review = {\MR{3430992}},
       URL = {https://doi.org/10.1016/j.topol.2015.05.029},
}

\bib{free}{misc}{
      author={Ivanov, Sergei},
     author = {Kadantsev, Georgii},
      author = {Kuznetskov, Kirill},
       title={Subquandles of free quandles},
        date={2019},
         url={https://arxiv.org/abs/1904.06571},
        note={Preprint, arXiv:1904.06571 [math.GR].},
}

\bib{joyce}{article}{
      author={Joyce, David},
       title={A classifying invariant of knots, the knot quandle},
        date={1982},
        ISSN={0022-4049,1873-1376},
     journal={J. Pure Appl. Algebra},
      volume={23},
      number={1},
       pages={37\ndash 65},
         url={https://doi.org/10.1016/0022-4049(82)90077-9},
      review={\MR{638121}},
}

\bib{symm-quandles-2}{article}{
      author={Kamada, Seiichi},
      author={Oshiro, Kanako},
       title={Homology groups of symmetric quandles and cocycle invariants of links and surface-links},
        date={2010},
        ISSN={0002-9947,1088-6850},
     journal={Trans. Amer. Math. Soc.},
      volume={362},
      number={10},
       pages={5501\ndash 5527},
         url={https://doi.org/10.1090/S0002-9947-2010-05131-1},
      review={\MR{2657689}},
}

\bib{symm-quandles}{incollection}{
      author={Kamada, Seiichi},
       title={Quandles with good involutions, their homologies and knot invariants},
        date={2007},
   booktitle={Intelligence of low dimensional topology 2006},
      series={Ser. Knots Everything},
      volume={40},
   publisher={World Sci. Publ., Hackensack, NJ},
       pages={101\ndash 108},
         url={https://doi.org/10.1142/9789812770967_0013},
      review={\MR{2371714}},
}

\bib{tensor}{article}{
    AUTHOR = {Kamada, Seiichi},
     TITLE = {Tensor products of quandles and 1-handles attached to surface-links},
   JOURNAL = {Topology Appl.},
    VOLUME = {301},
      YEAR = {2021},
     PAGES = {Paper No. 107520, 18},
      ISSN = {0166-8641,1879-3207},
  review = {\MR{4312970}},
       URL = {https://doi.org/10.1016/j.topol.2020.107520},
}

\bib{karmakar}{article}{,
    AUTHOR = {Karmakar, Biswadeep},
    author={Saraf, Deepanshi},
    author={Singh, Mahender},
     TITLE = {Generalized (co)homology of symmetric quandles over
              homogeneous {B}eck modules},
   JOURNAL = {J. Pure Appl. Algebra},
    VOLUME = {229},
      YEAR = {2025},
    NUMBER = {6},
     PAGES = {Paper No. 107956, 31},
      ISSN = {0022-4049,1873-1376},
  review = {\MR{4881590}},
}

\bib{fox}{article}{,
    AUTHOR = {Kauffman, Louis~H.},
    author = {Lopes, Pedro},
     TITLE = {Colorings beyond {F}ox: the other linear {A}lexander quandles},
   JOURNAL = {Linear Algebra Appl.},
    VOLUME = {548},
      YEAR = {2018},
     PAGES = {221--258},
      ISSN = {0024-3795,1873-1856},
  review = {\MR{3783058}},
       URL = {https://doi.org/10.1016/j.laa.2017.11.025},
}

\bib{lattice}{article}{,
    AUTHOR = {Kayacan, Sel\c cuk},
     TITLE = {Recovering information about a finite group from its subrack
              lattice},
   JOURNAL = {J. Algebra},
    VOLUME = {582},
      YEAR = {2021},
     PAGES = {26--38},
      ISSN = {0021-8693,1090-266X},
  review = {\MR{4256903}},
       URL = {https://doi.org/10.1016/j.jalgebra.2021.04.026},
}

\bib{lie}{article}{
    AUTHOR = {Majid, Shahn},
    author= {Rietsch, Konstanze},
     TITLE = {Lie theory and coverings of finite groups},
   JOURNAL = {J. Algebra},
    VOLUME = {389},
      YEAR = {2013},
     PAGES = {137--150},
      ISSN = {0021-8693,1090-266X},
  MRNUMBER = {\MR{3065997}},
       URL = {https://doi.org/10.1016/j.jalgebra.2013.02.042},
}

\bib{matveev}{article}{
      author={Matveev, S.~Vladimir},
       title={Distributive groupoids in knot theory},
        date={1982},
        ISSN={0368-8666},
     journal={Mat. Sb. (N.S.)},
      volume={119(161)},
      number={1},
       pages={78\ndash 88, 160},
      review={\MR{672410}},
}

\bib{oeis2}{misc}{, 
    AUTHOR = {McCranie, Jud}, 
    TITLE = {Sequence {A}060594 in the {O}n-Line {E}ncyclopedia of {I}nteger {S}equences},
    date = {2001},
    NOTE = {{\url{https://oeis.org/A060594}. Accessed: 2025-8-15}},
}

\bib{alex}{inproceedings}{
    AUTHOR = {Nelson, Sam},
     TITLE = {Classification of finite {A}lexander quandles},
 BOOKTITLE = {Proceedings of the {S}pring {T}opology and {D}ynamical
              {S}ystems {C}onference},
   JOURNAL = {Topology Proc.},
    VOLUME = {27},
      YEAR = {2003},
    NUMBER = {1},
     PAGES = {245--258},
      ISSN = {0146-4124,2331-1290},
  review = {\MR{2048935}},
}

\bib{nosaka}{article}{
    AUTHOR = {Nosaka, Takefumi},
     TITLE = {4-fold symmetric quandle invariants of 3-manifolds},
   JOURNAL = {Algebr. Geom. Topol.},
    VOLUME = {11},
      YEAR = {2011},
    NUMBER = {3},
     PAGES = {1601--1648},
      ISSN = {1472-2747,1472-2739},
  review = {\MR{2821435}},
       URL = {https://doi.org/10.2140/agt.2011.11.1601},
}

\bib{book}{book}{
      author={Nosaka, Takefumi},
       title={Quandles and topological pairs},
      series={SpringerBriefs in Mathematics},
   publisher={Springer, Singapore},
        date={2017},
        ISBN={978-981-10-6792-1; 978-981-10-6793-8},
         url={https://doi.org/10.1007/978-981-10-6793-8},
        subtitle={Symmetry, knots, and cohomology},
      review={\MR{3729413}},
}

\bib{symm-racks}{article}{
      author={Saito, Masahico},
      author={Zappala, Emanuele},
       title={Extensions of augmented racks and surface ribbon cocycle invariants},
        date={2023},
        ISSN={0166-8641,1879-3207},
     journal={Topology Appl.},
      volume={335},
       pages={Paper No. 108555, 19},
         url={https://doi.org/10.1016/j.topol.2023.108555},
      review={\MR{4594919}},
}

\bib{sim}{article}{
  title     = {Revisit to connected Alexander quandles of small orders via fixed point free automorphisms of finite abelian groups},
  author    = {Sim, Hyo-Seob}, 
  author = {Song, Hyun-Jong},
  journal   = {East Asian Math. J.},
  volume    = {30},
  number    =  {3},
  pages     = {293--302},
  year      =  {2014},
           url={http://dx.doi.org/10.7858/eamj.2014.019},
}

\bib{ta-conj}{misc}{
      author={Ta, Lực},
       title={Good involutions of conjugation subquandles},
        date={2025},
         url={https://arxiv.org/abs/2505.08090},
        note={Preprint, arXiv:2505.08090 [math.GT].},
}

\bib{taGQ}{misc}{
      author={Ta, Lực},
       title={Graph quandles: {G}eneralized {C}ayley graphs of racks and right quasigroups},
        date={2025},
         url={https://arxiv.org/abs/2506.04437},
        note={Preprint, arXiv:2506.04437 [math.GT].},
}

\bib{code}{misc}{
      author={Ta, Lực},
       title={Symmetric linear quandles},
        date={2025},
         url={https://github.com/luc-ta/Symmetric-Linear-Quandles},
        note={\url{https://github.com/luc-ta/Symmetric-Linear-Quandles}. Accessed: 2025-8-20.},
        organization={GitHub},
}

\bib{takasaki}{article}{
      author={Takasaki, Mituhisa},
       title={Abstraction of symmetric transformations},
        date={1943},
        ISSN={0040-8735,1881-2015},
     journal={T\^ohoku Math. J.},
      volume={49},
       pages={145\ndash 207},
      review={\MR{21002}},
}

\bib{taniguchi}{article}{
      author={Taniguchi, Yuta},
       title={Good involutions of generalized {A}lexander quandles},
        date={2023},
        ISSN={0218-2165,1793-6527},
     journal={J. Knot Theory Ramifications},
      volume={32},
      number={12},
       pages={Paper No. 2350081, 7},
         url={https://doi.org/10.1142/S0218216523500815},
      review={\MR{4688855}},
}

\bib{yasuda}{article}{
    AUTHOR = {Yasuda, Jumpei},
     TITLE = {Computation of the knot symmetric quandle and its application
              to the plat index of surface-links},
   JOURNAL = {J. Knot Theory Ramifications},
    VOLUME = {33},
      YEAR = {2024},
    NUMBER = {3},
     PAGES = {Paper No. 2450005, 25},
      ISSN = {0218-2165,1793-6527},
  review = {\MR{4766150}},
       URL = {https://doi.org/10.1142/S0218216524500056},
}

\end{biblist}
\end{bibdiv}

\end{document}